\documentclass[12pt]{article}
\textheight 23cm \textwidth 16.5cm

\oddsidemargin 0pt \evensidemargin 0pt \topmargin -40pt
\usepackage{amsmath}
\usepackage{amsfonts}
\usepackage{amssymb}
\usepackage{amsthm}
\usepackage{mathrsfs}
\begin{document}
\title{ Integral Representations in Weighted Bergman Spaces on the Tube Domains }
\author{Yun Huang, Guan-Tie Deng, Tao Qian}

\author{Yun Huang \thanks{Department of Mathematics, Faculty of Science and Technology, University of Macau, Macao (Via Hong
Kong). Email: yhuang12@126.com.},\
Guan-Tie~Deng \thanks
{Corresponding author. School of  Mathematical Sciences, Beijing
Normal University, Beijing, 100875. Email:denggt@bnu.edu.cn.\ This
work was partially supported by NSFC (Grant  11971042) and by SRFDP
(Grant 20100003110004)},\
Tao~Qian \thanks{ Institute of Systems Engineering, Macau University of Science and Technology, Avenida Wai Long, Taipa, Macau). Email: tqian1958@gmail.com. The work is supported by the Macau Science and Technology foundation No.FDCT079/2016/A2 , FDCT0123/2018/A3, and the Multi-Year Research Grants of the University of Macau No. MYRG2018-00168-FST.}
}
\date{}
\maketitle
\begin{center}
\begin{minipage}{120mm}
{ Herein, the Laplace transform representations for functions of weighted holomorphic Bergman spaces on the tube domains are developed. Then a weighted version of the edge-of-the-wedge theorem is derived as a byproduct of the main results. }\\

{\bf Key words}:\ Weighted Bergman space, Tube domain, Laplace transform, Integral representation, Regular cone\\

\end{minipage}
\end{center}

\section{Introduction}
\newtheorem{theorem}{\sc{Theorem}}
\newtheorem{lemma}{\sc{Lemma}}
\newtheorem{corollary}{\sc{Corollary}}

The classical Paley--Wiener theorem asserts that functions of the classical Hardy space $H^2(\mathbb C^+)$ can be written as the Laplace transforms of $L^2$ functions supported in the right half of the real axis, see \cite{SW}. This theorem has been extended to more general Hardy spaces, including the $H^p$ spaces cases ($0<p\leq\infty$), higher dimensional cases and weighted spaces, see \cite{QXYYY,DQ,DHQ,LI,Li,Q1}. Integral representation theorems have also been investigated for Bergman spaces.

We first introduce some notations and definitions. Let $B$ be a domain (open and connected set) in $\mathbb R^n$ and $T_B=\mathbb R^n+iB\subset \mathbb C^n$ be the tube over $B$. For any element $z=(z_1,z_2,\ldots,z_n)$, $z_k=x_k+iy_k$, by definition, $z\in T_B$ is and only if $x=(x_1,\ldots,x_n)\in\mathbb R^n$ and $y=(y_1,\ldots,y_n)\in B$. The inner product of $z,w\in\mathbb C^n $ is defined as $z\cdot w=z_1w_1+z_2w_2+\cdots+z_nw_n$. The associated Euclidean norm of $z$ is $|z|=\sqrt{z\cdot \bar z}$, where $\bar z=(\bar z_1,\bar z_2,\ldots,\bar z_n)$.

A nonempty subset $\Gamma\subset\mathbb{R}^n$ is called an open cone if it satisfies (i) $0\notin\Gamma$, and (ii) $\alpha x+\beta y \in \Gamma$ for any $x, y\in \Gamma$ and $\alpha,\beta>0$. The dual cone of $\Gamma$ is defined as
$
\Gamma^*=\{y\in\mathbb{R}^n: y\cdot x \geq 0, \ {\rm for\  any }\  x\in \Gamma\}
$, which is clearly a closed convex cone with vertex at $0$. We say that the cone $\Gamma$ is regular if the interior of its dual cone $\Gamma^* $ is nonempty.

For $\frac1p+\frac1q=1$, define
\begin{equation*}
B^p(T_B)=\left\{F:F\mbox{ is holomorphic in }T_B\mbox{ and satisfies }\int_B\left(\int_{\mathbb R^n}|F(x+iy)|^pdx\right)^{q-1}dy<\infty\right\}.
\end{equation*}
Among the previous studies, Genchev showed that the function spaces $B^p$($1\leq p\leq2$), in the one- and multi-dimensions in \cite{Gen1} and \cite{Gen2}, respectively, admit integral representations in the Laplace transform form. These results can be applied to the Bergman spaces
\begin{equation*}
A^p(T_{\Gamma})=\left\{F:F\mbox{ is holomorphic on }T_{\Gamma} \mbox{ and satisfies }\int_{T_{\Gamma}}|F(x+iy)|^pdxdy<\infty\right\}
\end{equation*}
to obtain the corresponding integral representation results for $A^p(T_{\Gamma})$ in the range $1\leq p\leq 2$(\cite{Gen5}).
\\

In this paper we initiate a study on a class of function spaces, denoted by $A^{p,s}(B,\psi)$, of which each is associated with a weight function of the form $e^{-2\pi\psi(y)}$, where $\psi(y)\in C(B)$ is continuous on  $B$. The space $A^{p,s}(B,\psi)$($0<p\leq\infty,0<s\leq\infty$) is the collection of functions $F(z)$ that are holomorphic in $T_B$ and satisfy
\begin{equation*}
\|F\|_{A^{p,s}(B,\psi)}=\left(\int_{B}\left(\int_{\mathbb R^n}|F(x+iy)e^{-2\pi\psi(y)}|^pdx\right)^sdy\right)^{\frac{1}{sp}}<\infty \mbox{, }0<p,s<\infty,
\end{equation*}
\begin{equation*}
\|F\|_{A^{p,\infty}(B,\psi)}=\sup\left\{e^{-2\pi\psi(y)}\left(\int_{\mathbb R^n}|F(x+iy)|^pdx\right)^{\frac1p},y\in B\right\}<\infty\mbox{ , }0<p<\infty, s=\infty
\end{equation*}
and
\begin{equation*}
\|F\|_{A^{\infty,\infty}(B,\psi)}=\sup\left\{e^{-2\pi\psi(y)}|F(x+iy)|,x\in\mathbb R^n,y\in B\right\}<\infty\mbox{, }p=\infty,s=\infty.
\end{equation*}

This paper is structured as follows. In \S2, we introduce our main work on the integral representation for $A^{p,s}(B,\psi)$, which is separated into three cases, namely, $A^{p,s}(B,\psi)$ for $1\leq p\leq2$, $A^{p,s}(B,\psi)$ for $0<p<1$ and $A^{p,s}(\Gamma,\psi)$ for $p>2$, corresponding to Theorem 1, 2 and 3 respectively. The proofs are given in \S3. Finally, some results, referring to Corollary 2, Theorem 4 and Theorem 5, are derived as applications of the integral representation theorems claimed in \S2.

\section{Main results}

In order to introduce our main results, we define the set
\begin{equation}
U_{\alpha}(B,\psi)=\left\{t\in\mathbb R^n:\int_{B}e^{-2\pi \alpha(t\cdot y+\psi(y))}dy<\infty\right\} \label{support}
\end{equation}
 for $ \alpha\in (0,\infty) $ and
\begin{equation}
  U_{\infty}(B,\psi)=\{t: \inf_{y\in\Gamma}(y\cdot t+\psi(y))>-\infty\} \label{support-2}
\end{equation}
for $\alpha=\infty$.\\

The representation theorem for $A^{p,s}(B,\psi)$, where $1\leq p\leq2$ and $0<s\leq\infty$, is stated as follows.
\begin{theorem}
Assume that $1\leq p\leq2$, $0<s\leq\infty$, then each $F(z)\in A^{p,s}(B,\psi)$ admits an integral representation in the form
\begin{equation}
F(z)=\int_{\mathbb{R}^n} f(t)e^{ 2\pi i t\cdot z}dt,\ z\in T_{B}, \label{mainconclusion}
\end{equation}
in which, for $p=1$, $f(t)\in C(\mathbb R^n)$ satisfies
\begin{equation}
|f(t)|\left(\int_{B}e^{-2s\pi (y\cdot t+\psi(y))}dy\right)^{\frac1s}\leq \|F\|_{A^{1,s}(B,\psi)} \label{g0lq}
\end{equation}
and, for $1<p\leq2$ and $\frac1p+\frac1q=1$, $f(t)$ is a measurable function that satisfies
\begin{equation}
\left(\int_{B}\left(\int_{\mathbb R^n}|f(t)e^{-2\pi (y\cdot t+\psi(y))}|^q dt\right)^{\frac {sp}{q}}dy\right)^{\frac{1}{sp}}\leq\|F\|_{A^{p,s}(B,\psi)} \label{2g01q}.
\end{equation}
Moreover, $f$ is supported in $U_s(B,\psi)$ for $p=1$ and supported in $U_{sp}(B,\psi)$ for $1<p\leq2$, $0<s(p-1)\leq 1$.
\end{theorem}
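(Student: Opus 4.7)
The strategy is to build $f$ slice by slice and then use holomorphy to merge the slices. Fubini applied to the finiteness of $\|F\|_{A^{p,s}(B,\psi)}$ shows that for a.e.\ $y\in B$ the slice $F_y:=F(\cdot+iy)$ lies in $L^p(\mathbb R^n)$; its Fourier transform $\hat F_y$ is then a continuous function bounded by $\|F_y\|_1$ when $p=1$, and an element of $L^q(\mathbb R^n)$ with $\|\hat F_y\|_q\le\|F_y\|_p$ by Hausdorff--Young when $1<p\le 2$.

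The central step, which I expect to be the main obstacle, is to show that the modulated transform
\[
 f(t) := e^{2\pi y\cdot t}\,\hat F_y(t)
\]
is independent of $y\in B$. I would first establish this when $p=2$, using Plancherel together with Cauchy's theorem applied coordinate-wise to $F(z)e^{-2\pi it\cdot z}$ in each variable $y_k$, and exploiting the connectedness of $B$ to propagate the identity across $B$. For $1\le p<2$ I would reduce to the $L^2$ situation via a regularisation (for instance truncation of $F_y$ in the $x$-variable, or tensoring with an $x$-Gaussian factor), verify the identity on the regularised slices, and pass to the limit via the Hausdorff--Young bound, which controls the deviation $\hat F_y - \widehat{(F_y)_\epsilon}$ uniformly in $y$. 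Granting the $y$-independence of $f$, Fourier inversion on each slice yields
\[
 F(x+iy) = \int_{\mathbb R^n}\hat F_y(t)e^{2\pi it\cdot x}\,dt = \int_{\mathbb R^n} f(t) e^{2\pi it\cdot z}\,dt,
\]
which is (3).

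The estimates (4) and (5) then drop out by multiplying the slice inequality by $e^{-2\pi\psi(y)}$ and integrating in $y\in B$. For $p=1$, $|f(t)|e^{-2\pi(y\cdot t+\psi(y))}\le e^{-2\pi\psi(y)}\|F_y\|_1$, and taking the $L^s(B,dy)$-norm of both sides produces (4). For $1<p\le 2$, the analogous Hausdorff--Young bound on each slice, raised to the power $sp$ and integrated in $y$, gives (5) after an $(sp)$-th root.

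For the support statement, the $p=1$ case is immediate from (4): if $t\notin U_s(B,\psi)$, the second factor on the left equals $+\infty$, forcing $f(t)=0$. For $1<p\le 2$ with $\theta:=s(p-1)=sp/q\le 1$, set $g(t,y)=|f(t)|^q e^{-2\pi q(y\cdot t+\psi(y))}$, so that (5) reads $\int_B\bigl(\int_{\mathbb R^n} g\,dt\bigr)^\theta dy<\infty$. For any bounded $E\subset\mathbb R^n$, Jensen's inequality applied to the concave function $x\mapsto x^\theta$ on the probability measure $|E|^{-1}\chi_E\,dt$ gives $\int_E g^\theta\,dt\le |E|^{1-\theta}\bigl(\int_E g\,dt\bigr)^\theta$; integrating in $y\in B$ and interchanging by Tonelli yields $\int_E |f(t)|^{sp}\int_B e^{-2\pi sp(y\cdot t+\psi(y))}\,dy\,dt<\infty$, using $q\theta=sp$. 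Since the inner integral is $+\infty$ exactly when $t\notin U_{sp}(B,\psi)$, $f$ must vanish a.e.\ outside $U_{sp}(B,\psi)$, as claimed.
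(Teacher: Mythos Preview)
Your overall strategy---build $f$ slice by slice as $e^{2\pi y\cdot t}\hat F_y(t)$, prove $y$-independence, recover $F$ by Fourier inversion, then read off the estimates and support---is the paper's strategy as well. But two steps in your outline are genuine gaps.

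\textbf{Fourier inversion is unjustified.} After obtaining the $y$-independence of $f$, you write $F(x+iy)=\int\hat F_y(t)e^{2\pi it\cdot x}\,dt$. For $p=1$ the transform $\hat F_y$ is only bounded and continuous, and for $1<p\le2$ it is only in $L^q$; neither puts $\hat F_y$ in $L^1(\mathbb R^n)$, which is what the pointwise formula (3) requires. The paper inserts a separate argument here: fix $y_0\in B$ with $\overline{D(y_0,\delta)}\subset B$, split $\mathbb R^n$ into finitely many narrow polygonal cones $\Gamma_1,\dots,\Gamma_N$, and for each $k$ pick $y_k\in(y_0+\Gamma_k)$ with $|y_k-y_0|\approx\delta$. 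For $y$ near $y_0$ and $t\in\Gamma_k$ one has $(y_k-y)\cdot t\ge c\delta|t|$, so on $\Gamma_k$
\[
|f(t)e^{-2\pi y\cdot t}|=|\hat F_{y_k}(t)|\,e^{-2\pi(y_k-y)\cdot t}\le |\hat F_{y_k}(t)|\,e^{-2\pi c\delta|t|},
\]
and H\"older against the exponential gives $f(t)e^{-2\pi y\cdot t}\in L^1(\Gamma_k)$, hence in $L^1(\mathbb R^n)$ after summing. Only then does the integral in (3) converge and agree with $F$. This cone-splitting step is the main analytic ingredient absent from your plan.

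\textbf{The proposed regularisations destroy holomorphy.} Truncating $F_y$ in $x$, or multiplying by a Gaussian in the real variable $x$, produces a function that is no longer holomorphic in $z$, so the coordinate-wise Cauchy argument you intend for $y$-independence no longer applies to the regularised object. The paper instead treats $p=1$ directly (via $\partial_{y_n}=i\partial_{x_n}$ and the vanishing of certain boundary integrals coming from $F_y\in L^1$), and for $1<p\le2$ reduces to the $L^1$ case by the \emph{holomorphic} multiplier $l_\varepsilon^{-1}(z)=(1+\varepsilon(z_1^2+\cdots+z_n^2))^{-N}$, which has enough $x$-decay on bounded $y$-sets to force $F_y\,l_\varepsilon^{-1}\in L^1$ while preserving holomorphy.

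Your Jensen-inequality argument for $\mathrm{supp}\,f\subset U_{sp}(B,\psi)$ when $s(p-1)\le1$ is correct and is a nice alternative to the paper's route, which uses the Minkowski integral inequality (valid since $q/(sp)\ge1$) to obtain $\int_{\mathbb R^n}|f(t)|^q\bigl(\int_B e^{-2\pi sp(y\cdot t+\psi(y))}dy\bigr)^{q/(sp)}dt<\infty$ directly. Both arguments arrive at the same conclusion.
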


As given in the next theorem, integral representations in the form of Laplace transform are also available for $0<p<1$ and $0<s\leq\infty$.
\begin{theorem}
Assume that $F(z)\in A^{p,s}(B,\psi)$, where $0<p<1$ and $0<s\leq\infty$. Then there exists a continuous function $f(t)$ such that $f(t)e^{-2\pi y\cdot t}\in L^1(\mathbb R^n)$  and (\ref{mainconclusion}) hold for $y\in B$.
\end{theorem}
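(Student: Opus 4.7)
The plan is to reduce the case $0<p<1$ to the case $p=1$ already settled by Theorem~1, via a pointwise estimate on the horizontal slices of $F$ followed by the elementary factorization $|F|=|F|^{1-p}\cdot|F|^{p}$.

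The first step is to show that for every $y\in B$ there is a finite, locally bounded $M(y)$, continuous in $y$, such that
\begin{equation*}
|F(x+iy)|\le M(y)\,\|F\|_{A^{p,s}(B,\psi)}\qquad (x\in\mathbb{R}^{n}).
\end{equation*}
Fix $y_{0}\in B$ and $r>0$ with $\overline{B(y_{0},r)}\subset B$, let $z_{0}=x_{0}+iy_{0}$, and write $g(v):=\int_{\mathbb{R}^{n}}|F(\xi+iv)|^{p}\,d\xi$. Apply the plurisubharmonic mean-value inequality for $|F|^{ps}$ on the polydisc $P(z_{0},r)\subset T_{B}$. When $s\le 1$ the concavity of $t\mapsto t^{s}$ and Jensen's inequality in the $\xi$-variable give
\begin{equation*}
\int_{|\xi-x_{0}|<r}|F(\xi+iv)|^{ps}\,d\xi\le (2r)^{n(1-s)}\,g(v)^{s}.
\end{equation*}
Integrating in $v\in B(y_{0},r)$ and absorbing the weight $e^{-2\pi sp\psi}$ locally (by continuity of $\psi$) yields $|F(x_{0}+iy_{0})|^{ps}\le C(y_{0})\,\|F\|_{A^{p,s}}^{ps}$. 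The case $s>1$ is handled by plurisubharmonicity of $|F|^{p}$ directly, using $g\in L^{s}_{\mathrm{loc}}(B)\subset L^{1}_{\mathrm{loc}}(B)$.

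The second step uses the pointwise bound together with $|F|=|F|^{1-p}|F|^{p}$: at every $y\in B$ for which $g(y)<\infty$ (a full-measure subset of $B$) one has
\begin{equation*}
\int_{\mathbb{R}^{n}}|F(x+iy)|\,dx\le M(y)^{1-p}\,g(y),
\end{equation*}
and raising to the $s$-th power, then integrating against $e^{-2\pi s\widetilde{\psi}(y)}\,dy$ with the continuous weight
\begin{equation*}
\widetilde{\psi}(y):=p\,\psi(y)+\frac{1-p}{2\pi}\,\log M(y)
\end{equation*}
recovers exactly $\int_{B}g(y)^{s}e^{-2\pi sp\psi(y)}\,dy=\|F\|_{A^{p,s}(B,\psi)}^{sp}<\infty$. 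Therefore $F\in A^{1,s}(B,\widetilde{\psi})$, and Theorem~1 applied with $p=1$ and weight $\widetilde{\psi}$ supplies a continuous $f\in C(\mathbb{R}^{n})$, supported in $U_{s}(B,\widetilde{\psi})$, that represents $F$ via \eqref{mainconclusion} and satisfies the bound~\eqref{g0lq}. For a fixed $y_{0}\in B$ and small $\delta$ with $\overline{B(y_{0},\delta)}\subset B$, a lower bound for $\int_{B}e^{-2\pi s(y\cdot t+\widetilde{\psi}(y))}\,dy$ is obtained by restricting to $B(y_{0},\delta)$ and observing that the spherical half-cap $\{u\in B(0,\delta):u\cdot t\le-\delta|t|/2\}$ contributes at least $c\,\delta^{n}e^{\pi s\delta|t|}$; inserting this into \eqref{g0lq} gives $|f(t)|\,e^{-2\pi y_{0}\cdot t}\le C(y_{0})\,\|F\|\,e^{-\pi\delta|t|}$, and the exponential decay forces $f(t)e^{-2\pi y_{0}\cdot t}\in L^{1}(\mathbb{R}^{n})$ for every $y_{0}\in B$.

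The main obstacle I anticipate lies in the first step when $ps<1$: one has to choose the right exponent in the plurisubharmonic mean-value inequality and combine it with Jensen in the inner $\xi$-integral so that the local $L^{1}$-average of $g^{s}$---the quantity the hypothesis $F\in A^{p,s}$ actually controls---dominates the $ps$-power of $|F|$. A secondary technical point is verifying that $M(y)$ is continuous on $B$ (so that $\widetilde{\psi}\in C(B)$ and Theorem~1 applies as stated); this reduces to the continuous dependence on $y$ of the distance $r(y)$ to $\partial B$ and of the local suprema of $\psi$.
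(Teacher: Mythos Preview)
Your proposal is correct and follows essentially the same route as the paper: a pointwise sup bound coming from plurisubharmonicity of $|F|^{t}$ (the paper packages this as Lemma~1, proved with H\"older rather than your Jensen split into $s\le1$ and $s>1$), the factorization $|F|=|F|^{1-p}|F|^{p}$ to land in an $A^{1,s}$ space with a modified weight, and then an appeal to the $p=1$ case of Theorem~1. The only noteworthy difference is organizational: the paper works locally on a ball $B_\delta=D(y_0,\delta)$ with the constant weight $\psi_\delta(y_0)=\max_{\overline{B_\delta}}\psi$ and then uses connectedness of $B$ to propagate the representation, whereas you build a single continuous weight $\widetilde\psi$ on all of $B$ and invoke Theorem~1 once---your version is slightly cleaner but requires the continuity check on $M(y)$ that you flagged, which the paper's local argument sidesteps.
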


Considering the property of $f(t)$ for the case of $0<p<1$, we let $B$ be a regular open convex cone $\Gamma$ and let $\psi \in C(\Gamma)$ satisfy
\begin{equation}
 R_{\psi}=\varlimsup\limits_{y\in \Gamma,y\to\infty} \frac{\psi (y)}{|y|}<\infty.
 \label{psicondition}
\end{equation}
Then we obtain the following corollary.

\begin{corollary}
Assume that $\Gamma$ is a regular open convex cone and $F(z)\in A^{p,s}(\Gamma,\psi)$ for $0<p<1$, $0<s\leq\infty$, where $\psi\in C(\Gamma)$ satisfies (\ref{psicondition}). Then there exists $f(t)$ supported in $\Gamma^*+\overline{D(0,R_{\psi})}$ such that (\ref{mainconclusion}) holds and
$
|f(t)|\left(\int_{\Gamma}e^{-2s\pi (y\cdot t+R_{\psi}|y|)}dy\right)^{\frac1s}
$
is slowly increasing.
\end{corollary}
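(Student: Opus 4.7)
My plan is to invoke Theorem~2 to obtain the continuous $f$ and the representation (\ref{mainconclusion}), and then to read off the support and growth assertions from the geometry of $\Gamma$ combined with the hypothesis (\ref{psicondition}). Theorem~2 directly supplies a continuous $f$ with $f(t)e^{-2\pi y\cdot t}\in L^{1}(\mathbb R^{n})$ for every $y\in\Gamma$ and $F(z)=\int f(t)e^{2\pi it\cdot z}\,dt$ on $T_{\Gamma}$. In addition, inspection of the proof of Theorem~2 should provide a pointwise majorant of the form
\[
|f(t)|\le C(y)\,\|F\|_{A^{p,s}(\Gamma,\psi)}\,e^{2\pi(y\cdot t+\psi(y))},
\]
in which $C(y)$ grows at worst polynomially in $|t|$; this will be the analytic backbone of everything that follows.

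To establish the support inclusion, I would first verify the convex-geometric characterization: $t\notin\Gamma^{*}+\overline{D(0,R_{\psi})}$ if and only if there exists $y_{0}\in\Gamma$ with $|y_{0}|=1$ and $y_{0}\cdot t<-R_{\psi}$. One direction is immediate from the definitions; the converse follows by Hahn--Banach applied to the closed convex set $\Gamma^{*}+\overline{D(0,R_{\psi})}$, observing that any separating functional must lie in $-(\Gamma^{*})^{*}=-\overline{\Gamma}$ because $\Gamma^{*}$ is a cone, and then perturbing it into $\Gamma$ using the openness of $\Gamma$. For such $t$, choose $\varepsilon>0$ with $y_{0}\cdot t<-R_{\psi}-\varepsilon$ and insert $y=\lambda y_{0}$ in the pointwise bound; hypothesis (\ref{psicondition}) yields $\psi(\lambda y_{0})\le(R_{\psi}+\varepsilon/2)\lambda$ for large $\lambda$, so the right-hand side decays like $e^{-\pi\varepsilon\lambda}$ and forces $f(t)=0$.

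For the slowly increasing claim, I would compare the two weight integrals. Splitting $\Gamma$ into a bounded piece (where $\psi$ is bounded by continuity) and the outer conical tail (where (\ref{psicondition}) gives $\psi(y)\le R_{\psi}|y|+\varepsilon|y|$ for any preassigned $\varepsilon>0$ once $|y|$ is large), a direct comparison shows that $\int_{\Gamma}e^{-2s\pi(y\cdot t+R_{\psi}|y|)}\,dy$ differs from $\int_{\Gamma}e^{-2s\pi(y\cdot t+\psi(y))}\,dy$ by at most a polynomial factor in $|t|$ on the support identified above. Multiplying by the pointwise bound then yields polynomial growth of $|f(t)|(\int_{\Gamma}e^{-2s\pi(y\cdot t+R_{\psi}|y|)}\,dy)^{1/s}$ in $|t|$.

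The main obstacle is extracting a serviceable pointwise estimate on $f$ in the regime $0<p<1$: because $F(\cdot+iy)$ is only in $L^{p}$ with $p<1$, Fourier inversion is not directly at our disposal, and the bound must be produced via a subharmonicity or Plancherel--P\'olya argument akin to the one underlying Theorem~2. Once this estimate is in place, the remainder of the argument is convex-geometric bookkeeping that exploits the regularity of the cone $\Gamma$ and the asymptotic linearity of $\psi$ encoded in $R_{\psi}$.
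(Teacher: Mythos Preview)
Your invocation of Theorem~2 for the representation, and your separation argument for the support inclusion, are both sound; the latter is essentially the content of the paper's Lemma~\ref{supportrelation}, which proves $U_{\alpha}(\Gamma,\psi)\subset\Gamma^{*}+\overline{D(0,R_{\psi})}$ by the same geometric mechanism (find a direction $e_{0}\in\overline{\Gamma}$ along which $y\cdot t+\psi(y)\to-\infty$).

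The slowly increasing argument, however, has a genuine gap. Your plan is to (i) extract from Theorem~2 a pointwise bound $|f(t)|\le C(y)\,\|F\|\,e^{2\pi(y\cdot t+\psi(y))}$ and (ii) compare $\int_{\Gamma}e^{-2s\pi(y\cdot t+R_{\psi}|y|)}\,dy$ with $\int_{\Gamma}e^{-2s\pi(y\cdot t+\psi(y))}\,dy$ up to a polynomial factor. Step (ii) presupposes a global bound of the form $|f(t)|\bigl(\int_{\Gamma}e^{-2s\pi(y\cdot t+\psi(y))}\,dy\bigr)^{1/s}\le C$, i.e.\ inequality~(\ref{g0lq}) over the full cone $\Gamma$; but that inequality is available only for $F\in A^{1,s}(\Gamma,\psi)$, and the proof of Theorem~2 produces membership in $A^{1,s}(B_{\delta},\psi_{\delta})$ \emph{locally}, with a constant depending on $\delta^{-n(1-p)(1+1/s)/p}$ (see (\ref{Fbergman1})). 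You cannot simply integrate the pointwise bound over $y\in\Gamma$, since $C(y)$ carries this negative power of $\delta(y)=d(y,\partial\Gamma)$. The comparison of the two weight integrals is also delicate on its own: $\psi$ is only assumed continuous on the open cone and may blow up at the vertex, so the integral with weight $\psi$ need not be comparable to the one with $R_{\psi}|y|$.

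What the paper actually does is exploit the cone geometry to \emph{optimize} the $\delta$-dependent constant. It translates, writing $f(t)=\check{F}_{y_{0}+y}(-t)\,e^{2\pi(y_{0}+y)\cdot t}$ for any $y_{0}\in\Gamma$, bounds $\psi_{\delta}(y)\le R_{\psi}(1+|y_{0}|+|y|)$, and then applies (\ref{Fbergman1}) to obtain
\[
|f(t)|\Bigl(\int_{\Gamma}e^{-2s\pi(y\cdot t+R_{\psi}|y|)}\,dy\Bigr)^{1/s}\le C\,\delta^{-n(1-p)(1+s)/(sp)}\,e^{2\pi(R_{\psi}(1+|y_{0}|)+y_{0}\cdot t)}.
\]
The crucial observation is that along a ray $y_{0}=\rho v$ with $v\in\Gamma$ fixed, one has $\delta=d(y_{0},\partial\Gamma)=\rho\,d(v,\partial\Gamma)$, so the right-hand side becomes $C\rho^{-\alpha}e^{2\pi\rho(R_{\psi}+|t|)+O(1)}$. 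Minimizing in $\rho$ (take $\rho\sim\alpha/(R_{\psi}+|t|)$) gives a bound of size $(1+|t|)^{n(1/p-1)(1/s+1)}$. This optimization over the scale parameter $\rho$, made possible by the homogeneity $\delta(\rho v)=\rho\,\delta(v)$ of a cone, is the missing ingredient in your outline; it is what converts the $\delta^{-\alpha}$ loss from the subharmonicity lemma into the asserted polynomial growth in $|t|$.
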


Similarly, we establish an analogy for $p>2$ and $0<s\leq\infty$.
\begin{theorem}
Assume that $p>2$, $0<s\leq\infty$, $\Gamma$ is a regular open convex cone in $\mathbb R^n$ and $\psi\in C(\Gamma)$ satisfies (\ref{psicondition}). If $F(z)\in A^{p,s}(\Gamma,\psi)$ satisfying
\begin{equation}
\varliminf_{y\in \Gamma,y\to0}\int_{\mathbb R^n}|F(x+iy)|^2dx<\infty,\label{hardylittlewood}
\end{equation}
then there exists $f(t)\in L^2(\mathbb{R}^n)$ supported in $U_{sp}(\Gamma,\psi)$ such that (\ref{mainconclusion})
  holds for all $z\in T_{\Gamma}$.
\end{theorem}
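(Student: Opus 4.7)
The plan is to use (\ref{hardylittlewood}) to open a Plancherel foothold near the boundary of $\Gamma$, to build a single $f\in L^2(\mathbb R^n)$ by contour-shift coherence followed by Fatou, and then to confine $\operatorname{supp}f$ in $U_{sp}(\Gamma,\psi)$ using the $A^{p,s}(\Gamma,\psi)$-finiteness of $F$. The standard $H^2$-Paley--Wiener theory is not directly available because $p>2$ yields only $L^p$ slice bounds; the role of (\ref{hardylittlewood}) is precisely to supply the missing $L^2$ data along a boundary sequence.

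First I would pick $y_k\in\Gamma$ with $y_k\to 0$ and $\sup_k\|F(\cdot+iy_k)\|_{L^2}\leq M$, and set $f_k(t)=\int_{\mathbb R^n}F(x+iy_k)e^{-2\pi i t\cdot x}\,dx$ in the Plancherel sense, so $\|f_k\|_2\leq M$. Using holomorphy of $F$ on $T_\Gamma$ together with the pointwise growth bounds on $|F|$ that follow from subharmonicity of $|F|^p$, the $A^{p,s}$-norm, and (\ref{psicondition}), a multi-variable Cauchy contour shift between the heights $y_k$ and $y_j$ produces the coherence identity $f_k(t)e^{2\pi y_k\cdot t}=f_j(t)e^{2\pi y_j\cdot t}$ almost everywhere. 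Writing $f(t):=f_k(t)e^{2\pi y_k\cdot t}$ and applying Fatou's lemma to $\int|f(t)|^2 e^{-4\pi y_k\cdot t}\,dt\leq M^2$ as $y_k\to 0$ then places $f$ in $L^2(\mathbb R^n)$.

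At $z=x+iy_k$ the identity (\ref{mainconclusion}) is just $L^2$-Fourier inversion. To extend it to every $z\in T_\Gamma$, I would first verify $\operatorname{supp}f\subset\Gamma^*$: if $t_0\notin\Gamma^*$, pick $y^*\in\Gamma$ with $y^*\cdot t_0<0$, shift the contour by $\lambda y^*$, and let $\lambda\to\infty$; the factor $e^{2\pi\lambda y^*\cdot t_0}\to 0$ beats the at-most-exponential growth of $F$ permitted by (\ref{psicondition}) and forces $f=0$ near $t_0$. Since $y\cdot t\geq 0$ for $y\in\Gamma$ and $t\in\Gamma^*$, the integral $\int f(t)e^{2\pi i t\cdot z}\,dt$ defines a holomorphic function of $z\in T_\Gamma$ that coincides with $F$ along $\{y_k\}$, and the identity theorem yields (\ref{mainconclusion}) throughout $T_\Gamma$.

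For the refined inclusion $\operatorname{supp}f\subset U_{sp}(\Gamma,\psi)$, I would test against a bump $\phi$ concentrated near a Lebesgue point $t_0$ of $f$, and use Parseval together with H\"older (with $1/p+1/q=1$) to obtain the pointwise bound $\left|\int f(t)e^{-2\pi y\cdot t}\phi(t)\,dt\right|\leq\|F(\cdot+iy)\|_p\,C_\phi$, where $C_\phi$ is a constant involving an $L^q$-norm of the Fourier transform of $\phi$. Raising to the $sp$-power, integrating against $e^{-2sp\pi\psi(y)}\,dy$ over $\Gamma$, and passing to the Lebesgue point then yields an estimate of the form
\[
|f(t_0)|^{sp}\int_\Gamma e^{-2sp\pi(y\cdot t_0+\psi(y))}\,dy\leq C\,\|F\|_{A^{p,s}(\Gamma,\psi)}^{sp},
\]
so $t_0\notin U_{sp}$ forces $f(t_0)=0$. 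The most delicate ingredient is the pair of contour shifts: showing that the vertical side contributions vanish requires extracting pointwise $x$-decay of $|F|$ from the integrated $A^{p,s}$-norm, and this is where the subharmonic mean-value inequality, (\ref{psicondition}), and the regularity of $\Gamma$ interact most intricately.
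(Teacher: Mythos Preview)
Your approach diverges from the paper's and has a genuine gap at the contour--shift step. The coherence $f_k(t)e^{2\pi y_k\cdot t}=f_j(t)e^{2\pi y_j\cdot t}$ requires killing the lateral contributions of a multi--variable Cauchy shift, namely quantities of the form $\int_{\mathbb R^{n-1}}|F((x',\pm R)+iy)|\,dx'$ as $R\to\infty$, with $y$ ranging over the segment joining $y_k$ and $y_j$. At these intermediate heights you only know $F_y\in L^p$ with $p>2$; the subharmonic mean--value bound does yield $|F(x+iy)|\to 0$ as $|x|\to\infty$ locally uniformly in $y$, but for $n\geq 2$ this says nothing about the $L^1(\mathbb R^{n-1})$--integrability of the trace you need. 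The paper sidesteps the issue by introducing a \emph{holomorphic} damping factor $l_\varepsilon(z)=\bigl(\prod_{j=1}^n(1-i\varepsilon e_j\cdot z)\bigr)^{2N}$ with $e_j\in\mathrm{int}\,\Gamma^*$ and $N>\tfrac{n}{2}(1-\tfrac1p)$: then $F_\varepsilon=F\,l_\varepsilon^{-1}\in L^1\cap L^2$ at \emph{every} height $y\in\Gamma$, so the $p=1$ case of Theorem~1 supplies both the contour shift and the pointwise bound $|g_\varepsilon(t)|e^{2\pi y\cdot t}\leq\|F_{\varepsilon,y}\|_{L^1}$. From that bound $\operatorname{supp}g_\varepsilon\subset -U_{sp}(\Gamma,\psi)$ follows immediately, and one passes to a weak $L^2$--limit along $\varepsilon_k\to 0$ using (\ref{hardylittlewood}) together with Fatou and Plancherel.

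Your refined--support argument also breaks: when the bump $\phi$ concentrates at $t_0$, the constant $C_\phi=\|\check\phi\|_{L^q}$ blows up like $\epsilon^{-n/q}$, so the displayed inequality $|f(t_0)|^{sp}\int_\Gamma e^{-2sp\pi(y\cdot t_0+\psi(y))}\,dy\leq C\,\|F\|_{A^{p,s}}^{sp}$ cannot be recovered in the limit. The paper avoids this because each $g_\varepsilon$ is \emph{continuous} (Fourier transform of an $L^1$ function), so the pointwise estimate and hence the support inclusion hold for $g_\varepsilon$ without any concentration procedure, and survive the weak limit. Note also that your intermediate claim $\operatorname{supp}f\subset\Gamma^*$ is too strong when $R_\psi>0$: your exponential comparison only yields $\operatorname{supp}f\subset\Gamma^*+\overline{D(0,R_\psi)}$, which is exactly what Lemma~\ref{supportrelation} records and is what the paper actually uses to make $e^{-2\pi y\cdot t}\chi_{-U_{sp}}\in L^2$.
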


The definition of $A^{p,s}(B,\psi)$ shows that $A^{p,s}(B,\psi)$ is a weighted Hardy space when $s=\infty$ and a weighted Bergman space when $s=1$. Taking $\psi(y)=0$, it becomes, for $s=\infty$ and $s=1$, respectively, the classical Hardy space $H^p$ and the classical Bergman space $A^p$. Therefore, our results herein can be regarded as generalizations of certain previously obtained results.

For example, taking $s=\infty$ and $B$ a regular open convex cone $\Gamma$, $A^{p,\infty}(B,\psi)=H^p(\Gamma,\psi)$ is the weighted Hardy spaces investigated in our previous paper \cite{DHQ}. Then Theorem 1, 2 and 3 in \cite{DHQ} can be derived from our main work, including Theorem 1, 2, 3 and Corollary 1 herein.
For $s=\infty$ and $\psi(y)=0$, letting $B$ be some specific domains, some previous studies for the Hardy spaces, see \cite{SW,DQ,LI,Li,Q1}, can be also derived from Theorem 1, 2, 3 and Corollary 1.

On the other hand, letting $s=1$, by using Theorem 1, 2, 3 and Corollary 1, we can obtain the representation theorems for the standard Bergman spaces. Note that for $s=1$, $B=\Gamma$ and $\psi(y)=0$, we have $A^{p,s}(B,\psi)=A^p(T_{\Gamma})$. We therefore conclude from Theorem 1 that the counterpart results of Theorem 1, 2 and 3 in \cite{Gen5} hold for the classical Bergman spaces $A^p(T_{\Gamma})$($1\leq p\leq2$). If we set $\psi(y)=0$ and $s=q-1$, where $\frac1p+\frac1q=1$, then $A^{p,s}(B,\psi)=B^p(T_B)$. The integral representation theorems for those function spaces $B^p(T_B)$($1\leq p\leq 2$) can be derived from Theorem 1 herein, see \cite{Gen2}. Especially, letting $s=1$, $p=2$, $\psi (y)=-\frac{\alpha}{4\pi} \log |y|$ and $B$ a regular open convex cone $\Gamma$, Theorem 1 implies a higher dimensional generalization of Theorem 1 of \cite{PD} in tube domains, which is established as Corollary 2 in the sequel.

\section{Proofs}

This section is devoted to proving the results stated in \S2.

\begin{proof}[Proof of Theorem 1]
We first prove the case of $p=1$. If $F(z)\in A^{1,s}(B,\psi)$, then $F_y(x)\in L^1(\mathbb R^n)$ as a function of $x$, and $\check F_y(x)$ as well, are both well defined. Next we  prove that $\check F_y(t)e^{-2\pi y\cdot t}$ is independent of $y\in B$. Without loss of generality, assume that $a=(a',a_n)$, $b=(a',b_n)\in B$,  and  $a+\tau(b-a)\in B$ for $0\leq\tau\leq1$, where $a'=(a_1,\ldots,a_{n-1})$. The fact $F_y(x)\in L^1(\mathbb R^n)$ implies that
\begin{equation*}
\int_0^{\infty}\int_0^1\int_{\mathbb R^{n-1}}\left(|F((x',x_n)+i(a+\tau(b-a)))|+|F((x',-x_n)+i(a+\tau(b-a)))|\right)dx'd\tau dx_n<\infty,
\end{equation*}
which implies
\begin{equation*}
\varliminf_{R\to\infty}\int_0^1\int_{\mathbb R^{n-1}}\left(|F((x',R)+i(a+\tau(b-a)))|+|F((x',-R)+i(a+\tau(b-a)))|\right)dx'd\tau=0.
\end{equation*}
Hence, we have
\begin{eqnarray*}
&&|\check F_b(t)e^{-2\pi b\cdot t}-\check F_a(t)e^{-2\pi a\cdot t}|\\
&=&\left|\int_{\mathbb R^n}\left(F(x+ib)e^{2\pi i(x+ib)\cdot t}-F(x+ia)e^{2\pi i(x+ia)\cdot t}\right)dx\right|\\
&=&\left|\int_{\mathbb R^n}\int_0^1\frac{\partial}{\partial \tau}\left(F(x+i(a+\tau(b-a)))e^{2\pi i(x+i(a+\tau(b-a)))\cdot t}\right)d\tau dx\right|\\
&=&\left|\int_{\mathbb R^n}\int_0^1\frac{\partial}{\partial y_n}\left(F(x+i((y',y_n))e^{2\pi i(x+i(y',y_n))\cdot t}\big|_{y_n=a_n+\tau(b_n-a_n)}(b_n-a_n)\right)d\tau dx\right|\\
&=&|b_n-a_n|\left|\int_{\mathbb R^n}\int_0^1 i\frac{\partial}{\partial x_n}\left(F(x+i(a+\tau(b-a)))e^{2\pi i(x+i(a+\tau(b-a)))\cdot t}\right)d\tau dx\right|\\
&\leq&|b_n-a_n|\varliminf_{R\to\infty}\int_0^1\int_{\mathbb R^{n-1}}\left(\left|F((x',R),(a+\tau(b-a)))\right|+\left|F((x',-R),(a+\tau(b-a)))\right|\right)\\
&&e^{-2\pi|t|(|a|+|b-a|)}dx'd\tau\\
&=&0.
\end{eqnarray*}
Remark that $B$ is connected and open, by an iteration argument, we can show  that $\check F_y(t)e^{-2\pi y\cdot t}$ is independent of $y\in B$ and we write it as $g(t)$. Then
 $g(t)=\check F_{y }(t)e^{-2\pi y \cdot t}$ holds for $y\in B$.
Next, we show that $g(t)e^{2\pi y\cdot t}\in L^1(\mathbb R^n)$.
Let us decompose $\mathbb R^n$ into a finite union of non-overlapping polygonal cones, $\Gamma_1,\Gamma_2,\dots,\Gamma_N$ with their very vertexes at the origin, i.e., $\mathbb R^n=\bigcup_{k=1}^{N}\Gamma_k$. Then $
\chi_{\Gamma_k}(t)g(t)e^{2\pi y\cdot t}=\chi_{\Gamma_k}(t)\check F_{y_k}(t)e^{-2\pi(y_k-y)\cdot t}$.
For any $y_0\in B$, there exists $\delta$ such that $\overline{D(y_0,\delta)}\subset B$. Then for any $y\in D(y_0,\frac{\delta}{4})$ and $y_k\in (y_0+\Gamma_k)$ satisfying $\frac{3\delta}{4}\leq|y_k-y_0|<\delta$, we get $(y_k-y)\cdot t=(y_k-y_0)\cdot t+(y_0-y)\cdot t$.
Since $y_k-y_0,t\in\Gamma_k$, the angle between the segments $O(y_k-y_0)$ and $Ot$ is less than, say $\frac{\pi}{4}$. Then $(y_k-y)\cdot t\geq\frac{|y_k-y_0|}{\sqrt 2}|t|-|y_0-y||t|\geq(\frac{3}{4\sqrt 2}-\frac14)\delta|t|\geq\frac14\delta|t|$. Thus, it follows from H\"{o}lder's inequality that
\begin{eqnarray*}
\int_{\Gamma_k}|g(t)e^{2\pi y\cdot t}|dt\leq\int_{\Gamma_k}|\check F_{y_k}(t)e^{-\pi\frac{\delta}{4}|t|}|dt\leq\|F_{y_k}(x)\|_{L^1(\mathbb R^n)}\int_{\Gamma_k}e^{-\pi\frac{\delta}{4}|t|}dt<\infty,
\end{eqnarray*}
which shows that $g(t)e^{2\pi y\cdot t}\in L^1(\Gamma_k)$. Hence $g(t)e^{2\pi y\cdot t}\in L^1(\mathbb R^n)$. Together with the relation $g(t)=\check F_y(t)e^{-2\pi y\cdot t}$ for $y\in B$, there holds $F(z)=\int_{\mathbb R^n}g(t)e^{-2\pi iz\cdot t}$ for all $y\in B$. By letting $f(t)=g(-t)$, we then obtain the desired formula (\ref{mainconclusion}) for $p=1$ and $z\in T_B$.

Thus, $f(t)e^{-2\pi y\cdot t}\in L^1(\Gamma_k)$ implies that
\begin{eqnarray}
\sup_{t\in\mathbb R^n}|f(t)|e^{-2\pi y\cdot t}&\leq&\int_{\mathbb R^n}|F(x+iy)|dx \nonumber\\
|f(t)|e^{-2\pi y\cdot t}e^{-2\pi\psi(y)}&\leq&\int_{\mathbb R^n}|F(x+iy)|e^{-2\pi\psi(y)}dx \nonumber\\
|f(t)|^s\int_Be^{-2s\pi (y\cdot t+\psi(y))}dy&\leq&\int_B\left(\int_{\mathbb R^n}|F(x+iy)|e^{-2\pi\psi(y)}dx\right)^sdy \nonumber\\
&=&\|F\|^s_{A^{1,s}(B,\psi)}\label{p1bounded},
\end{eqnarray}
which implies (\ref{g0lq}). Next we prove supp$f \subset U_{s}(B,\psi)$. Suppose that $t_0\notin U_{s}(B,\psi)$, then (\ref{support}) implies $\int_{B}e^{-2s\pi (y\cdot t_0+\psi(y))}dy=+\infty$ for $y\in B$. It then follows from (\ref{p1bounded}) that $f(t)=0$, which means the support of $f$, i.e., supp$f\subset U_{s}(B,\psi)$.

Next we prove the case $1<p\leq2$.  Let $B_{0}\subseteq B$ be a bounded connected open set, so there exists a positive constant $R_0$ such that $B_0\subseteq  D(0,R_0)$. Assume that $l_{\varepsilon}(z)=(1+\varepsilon(z_1^2+\cdots+z_n^2))^N$, where $N$ is an integer satisfying  $N>n$. Then for $\varepsilon\leq\frac{1}{2R_0^2}, z=x+iy $ with $|y|\leq R_0$,
 \begin{eqnarray*}
|l_{\varepsilon}(z)|&=&|((1+\varepsilon(z_1^2+\cdots+z_n^2))^2)^{\frac N2}| \\
&=&\left(\left(1+\varepsilon(|x|^2-|y|^2)\right)^2+4\varepsilon^2
\left(x\cdot y\right)^2\right)^{\frac N2} \\
&\geq & \left(1+\varepsilon(|x|^2-|y|^2)\right)^N\geq\left(\frac12+\varepsilon|x|^2\right)^N
\end{eqnarray*}
for $|y|\leq R_0$, i.e., $\left|l^{-1}_{\varepsilon}(z)\right|\leq\frac{1}{\left(\frac12+\varepsilon|x|^2\right)^N}$.
For $F_y(x)=F(x+iy)$, set $F_{\varepsilon,y}(x)=F_y(x)l_{\varepsilon}^{-1}(z)$, then based on H\"{o}lder's inequality,
\begin{equation*}
\int_{\mathbb R^n}\left|F_{\varepsilon,y}(x)\right|dx\leq\left(\int_{\mathbb R^n}\left|F_{ y}(x)\right|^pdx\right)^{\frac1p}\left(\int_{\mathbb R^n}\left|l_{\varepsilon}^{-1}(x+iy)\right|^qdx\right)^{\frac1q}<\infty,
\end{equation*}
where  $\frac1p+\frac1q=1$,
which implies that $F_{\varepsilon,y}(x)\in L^1(\mathbb R^n)$. Then as in the proof for $p=1$, $g_{\varepsilon,y}(t)=\check F_{\varepsilon,y}(t)e^{-2\pi y\cdot t}$ can be also proved to be independent of $y\in B_0$ when $1<p\leq2$. Put $g_{\varepsilon,y}(t)=g_{\varepsilon}(t)$, then $g_{\varepsilon}(t)e^{2\pi y\cdot t}=\check F_{\varepsilon,y}(t)\in L^1(\mathbb R^n)$.

On the other hand, it is obvious that $F_{\varepsilon,y}(x)\to F_{y}(x)$ pointwise as $\varepsilon\to0$. Now we prove that $\check F_{y}(t)e^{-2\pi y\cdot t}$ is also independent of $y\in B_0$. Indeed, for $a,b\in B_0$ and any compact subset $K\subset \mathbb R^n$, let $R_1=\max \{|z|: z\in K\}$,
\begin{eqnarray*}
&&\left(\int_K\left|\check F_{a}(t)e^{-2\pi a\cdot t}-\check F_{b}(t)e^{-2\pi b\cdot t}\right|^qdt\right)^{\frac1q}\\
&\leq&\left(\int_K\left|\check F_{a}(t)e^{-2\pi a\cdot t}-g_{\varepsilon }(t) \right|^qdt\right)^{\frac1q}+\left(\int_K\left|g_{\varepsilon }(t) -\check F_{b}(t)e^{-2\pi b\cdot t}\right|^qdt\right)^{\frac1q}\\
&=&\left(\int_K\left|\check F_{a}(t)e^{-2\pi a\cdot t}-\check F_{\varepsilon,a}(t)e^{-2\pi a\cdot t}\right|^qdt\right)^{\frac1q}+\left(\int_K\left|\check F_{\varepsilon,b}(t)e^{-2\pi b\cdot t}-\check F_{b}(t)e^{-2\pi b\cdot t}\right|^qdt\right)^{\frac1q}\\
&\leq&e^{2\pi R_0 R_1}\left(\left(\int_K\left|\check F_{a}(t)-\check F_{\varepsilon,a}(t)\right|^qdt\right)^{\frac1q}+\left(\int_K\left|\check F_{\varepsilon,b}(t)-\check F_{b}(t)\right|^qdt\right)^{\frac1q}\right)\\
&\leq&e^{2\pi R_0 R_1}\left(\left(\int_{\mathbb R^n}\left|F_{a}(t)- F_{\varepsilon,a}(t)\right|^pdt\right)^{\frac1p}+\left(\int_{\mathbb R^n}\left| F_{\varepsilon,b}(t)- F_{b}(t)\right|^pdt\right)^{\frac1p} \right)\\
&\to&0,
\end{eqnarray*}
as $\varepsilon\to0$. Hence we obtain that $\check F_{a}(t)e^{-2\pi a\cdot t}=\check F_{b}(t)e^{-2\pi b\cdot t}$ almost everywhere on $\mathbb R^n$ and write it as $g(t)$. Then we have $g(t)=\check F_{y}(t)e^{-2\pi y\cdot t}$.

Next, we show that $g(t)e^{2\pi y\cdot t}\in L^1(\mathbb R^n)$. As in the proof for $p=1$, let $\mathbb R^n=\bigcup_{k=1}^{N}\Gamma_k$ and $\overline{D(y_0,\delta)}\subset B_0$. Then for any $y\in D(y_0,\frac{\delta}{4})$ and $y_k\in (y_0+\Gamma_k)$ satisfying $\frac{3\delta}{4}\leq|y_k-y_0|<\delta$, we have
$$
 (y_k-y)\cdot t\geq\frac{|y_k-y_0|}{\sqrt 2}|t|-|y_0-y||t|\geq(\frac{3}{4\sqrt 2}-\frac14)\delta|t|\geq\frac14\delta|t|
$$
for $y_k-y_0,t\in\Gamma_k$. Thus, from H\"{o}lder's inequality
\begin{eqnarray*}
\int_{\Gamma_k}|g(t)e^{2\pi y\cdot t}|dt&\leq&\int_{\Gamma_k}|\check F_{y_k}(t)e^{-\pi\frac{\delta_0}{4}|t|}|dt\leq\left(\int_{\Gamma_k}|\check F_{y_k}(t)|^pdt\right)^{\frac1p}\left(\int_{\Gamma_k}|e^{-q\pi\frac{\delta_0}{4}|t|}|dt\right)^{\frac1q}<\infty,
\end{eqnarray*}
which shows that $g(t)e^{2\pi y\cdot t}\in L^1(\Gamma_k)$ and the function $G(z)$  defined by
$$
G(z)=\int_{\mathbb R^n}g(t)e^{-2\pi i(x+iy)\cdot t}dt
$$
is holomorphic in the tube domain  $T_{ D(y_0,\delta)}$.

Now we can prove that, for $y\in B_{0}$,
$$
\lim_{ \varepsilon\to0}\int_{\mathbb R^n}g_{\varepsilon}(t)e^{-2\pi i(x+iy)\cdot t}dt= \int_{\mathbb R^n}g(t)e^{-2\pi i(x+iy)\cdot t}dt.
$$
In fact, if $y\in B_{0}$,
\begin{eqnarray*}
&&\left|\int_{\mathbb R^n}(g_{\varepsilon}(t)-g(t))e^{-2\pi i(x+iy)\cdot t}dt\right|\\
&\leq& \int_{\mathbb R^n}\left|\left(\check F_{\varepsilon,y}(t)e^{-2\pi y\cdot t}-\check F_{y}(t)e^{-2\pi y\cdot t}\right)e^{2\pi iz\cdot t}\right|dt\\
&=&\sum_{k=1}^n\int_{\Gamma_k}\left|\left(\check F_{\varepsilon,y_k}(x)-\check F_{y_k}(x)\right)e^{-2\pi i(y_k-y)\cdot  t}\right|dt\\
&\leq& \sum_{k=1}^n\left(\int_{\Gamma_k}|\check F_{\varepsilon,y_k}(x)-\check F_{y_k}(x)|^qdt\right)^{\frac1q} \left(\int_{\Gamma_k}e^{-p\pi\frac{\delta_0}{4}|t|}dt\right)^{\frac1p}\\
&\leq&C_{\delta_0}\sum_{k=1}^n\left(\int_{\Gamma_k}| F_{\varepsilon,y_k}(x)-F_{y_k}(x)|^pdt\right)^{\frac1p}\\
&\to&0
\end{eqnarray*}
when $\varepsilon\to0$, where $C_{\delta_0}^p=\int_{\mathbb{R}^n}e^{-p\pi\frac{\delta_0}{4}|t|}dt$. It follows that $\lim\limits_{\varepsilon\to0}F_{\varepsilon}(z)=G(z)$. Combining with $\lim\limits_{\varepsilon\to0}F_{\varepsilon}(z)=F(z)$, we can state $G(z)=F(z)$ for $y\in B_{0}$. Then there exists a measurable function $g(t)$ such that $F(z)=\int_{\mathbb R^n}g(t)e^{-2\pi iz\cdot t}dt$ holds for $ y\in B_{0}$. Since $B$ is connected, we can choose a sequence of bounded connected open set   $\{B_k\}$  such that $B_{0}\subset B_{1}\subset\cdots$ and $B=\bigcup_{k=0}^{\infty}B_{k}$. Together with the fact that $g(t)=\check F_y(t)e^{-2\pi y\cdot t}$ is independent of $y\in B_{k}$, then $\check F_{y_l}(t)e^{-2\pi y_l\cdot t}=\check F_{y_j}(t)e^{-2\pi y_j\cdot t}=\check F_{y }(t)e^{-2\pi y \cdot t}$ for $l\not=j$, $y_l\in B_{l}, y_j\in B_j$ and $y\in B_0$. Hence $g(t)e^{2\pi y\cdot t}=\check F_y(t)$ holds for $y\in B_{k}$, $k=0,1,2,\ldots.$ In other words, $f(z)=\int_{\mathbb R^n}g(t)e^{-2\pi iz\cdot t}dt$ holds for all $y\in B$. By letting $f(t)=g(-t)$, we obtain the desired representation $F(z)=\int_{\mathbb R^n}f(t)e^{2\pi iz\cdot t}dt$ for $y\in B$ when $1<p\leq 2$.

For $\frac1p+\frac1q=1$, based on the Hausdorff-Young Inequality,
\begin{equation}
\left(\int_{\mathbb R^n}|f(t)e^{-2\pi y\cdot t}|^qdt\right)^{\frac1q}\leq\left(\int_{\mathbb R^n}|F(x+iy)|^pdx\right)^{\frac1p},\label{1p2hyi}
\end{equation}
then
\begin{equation*}
\left(\left(\int_{\mathbb R^n}|f(t)e^{-2\pi y\cdot t}|^qdt\right)^{\frac pq}e^{-2p\pi\psi(y)}dy\right)^s\leq\left(\int_{\mathbb R^n}|F(x+iy)e^{-2\pi\psi(y)}|^pdx\right)^s.
\end{equation*}
Performing integral about $y\in B$ on both sides, we get
\begin{equation*}
\int_{B}\left(\left(\int_{\mathbb R^n}|f(t)e^{-2\pi y\cdot t}|^qdt\right)^{\frac pq}e^{-2p\pi\psi(y)}\right)^sdy\leq \int_{B}\left(\int_{\mathbb R^n}|F(x+iy)e^{-2\pi\psi(y)}|^pdx\right)^sdy
\end{equation*}
and
\begin{equation}
\int_{B}\left(\left(\int_{\mathbb R^n}|f(t)e^{-2\pi y\cdot t}|^qdt\right)^{\frac pq}e^{-2p\pi\psi(y)}\right)^sdy\leq \|F\|^{sp}_{A^{p,s}(B,\psi)}.\label{1p2hyi-9}
\end{equation}
As a result, formulas (\ref{mainconclusion}) and (\ref{2g01q}) hold for $1<p\leq2$. Now we prove that supp$f\subset U_{sp}(B,\psi)$ when $0<s(p-1)\leq 1$. For $0<s(p-1)\leq 1$, we have
$\frac{q}{sp}\geq 1$. Then Minkowski's inequality and (\ref {1p2hyi-9}) imply that
\begin{equation}
\int_{\mathbb R^n}|f(t)|^q\left(\int_Be^{-2\pi ps(y\cdot t+\psi(y))}dy\right)^{\frac {q}{ps}}dt\leq \|F\|^{q}_{A^{p,s}(B,\psi)}<\infty.\label{p12support}
\end{equation}
Consequently, It follows from (\ref{p12support}) and (\ref{support}) that $f(t)=0$ for almost every $ t \not\in U_{sp}(B,\psi)$. Therefore, supp$f\subset U_{sp}(B,\psi)$.
\end{proof}

In order to prove Theorem 2, we first introduce a lemma.
\begin{lemma}\label{subharmonicproperty}
Suppose that $F(z)\in A^{p,s}(B,\psi)$, where $0<p<\infty$ and $0<s\leq\infty$, then for $y_0\in B$ and   positive constant $\delta$ such that $D_n(y_0,\delta)\subset B$ , there exist constants $N>1$ and $C_{n,N,p,s}$ depending on $n, N, p, s$ such that
\begin{equation}
|F(z)|\leq C_{n,N,p,s}\delta^{-\frac np(1+\frac1s)}e^{2 \pi\psi_{\delta}(y_0)}, \label{Fvarepsilonrange}
\end{equation}
where $\psi_{\delta}(y_0)=\max\{\psi(\eta):|\eta-y_0|\leq\delta\}$.
\end{lemma}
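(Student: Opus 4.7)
The plan is to use the plurisubharmonicity of $|F|^{\alpha}$ (valid for every $\alpha>0$, since $\log|F|$ is plurisubharmonic whenever $F$ is holomorphic) together with the iterated one-variable mean-value inequality on a polydisc centered at $z$, and then replace the resulting integral by the $A^{p,s}(B,\psi)$-norm through a single H\"older step. The appropriate choice of $\alpha$ and of the variable in which to apply H\"older will depend on whether $s\ge 1$ or $0<s<1$.

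First I would fix $\rho=\delta/(2\sqrt n)$, so that every $w$ in the polydisc $P(z,\rho):=\{w\in\mathbb C^n:|w_k-z_k|<\rho,\ k=1,\dots,n\}$ has imaginary part in $D_n(y_0,\delta/2)\subset D_n(y_0,\delta)\subset B$. Iterating the sub-mean value inequality for the subharmonic function $|F|^\alpha$ in each complex coordinate yields
\begin{equation*}
|F(z)|^{\alpha}\le\frac{1}{(\pi\rho^2)^n}\int_{P(z,\rho)}|F(w)|^{\alpha}\,dV(w),
\end{equation*}
and writing $w=\xi+i\eta$ and using $\psi(\eta)\le\psi_\delta(y_0)$ throughout the polydisc, one may insert the weight $e^{-2\pi\alpha\psi(\eta)}$ at the cost of pulling out the factor $e^{2\pi\alpha\psi_\delta(y_0)}$.

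For $s\ge 1$ (including $s=\infty$) I take $\alpha=p$. Enlarging the $\xi$-fiber to all of $\mathbb R^n$ produces the inner function $J_\eta=\int_{\mathbb R^n}|F(\xi+i\eta)e^{-2\pi\psi(\eta)}|^{p}\,d\xi$, and H\"older in $\eta$ on the cube $\{|\eta-y_0|_\infty<\rho\}$ with exponents $s,\;s/(s-1)$ gives $\int J_\eta\,d\eta\le\|F\|_{A^{p,s}(B,\psi)}^{p}(2\rho)^{n(1-1/s)}$. Collecting powers of $\rho$ and setting $\rho\sim\delta$ yields $|F(z)|^p\le C\,\delta^{-n(1+1/s)}e^{2\pi p\psi_\delta(y_0)}\|F\|_{A^{p,s}(B,\psi)}^{p}$, which after extracting the $p$-th root is exactly (\ref{Fvarepsilonrange}).

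The main obstacle is the case $0<s<1$, in which the natural H\"older step in $\eta$ goes the wrong way. The remedy is to apply subharmonicity to $|F|^{ps}$ rather than $|F|^{p}$ and to perform H\"older in the real variable $\xi$ with exponents $(1/s,\;1/(1-s))$:
\begin{equation*}
\int_{|\xi-x_0|_\infty<\rho}|F(\xi+i\eta)|^{ps}\,d\xi\le\left(\int_{\mathbb R^n}|F(\xi+i\eta)|^{p}\,d\xi\right)^{s}(2\rho)^{n(1-s)}.
\end{equation*}
After extracting the weight and integrating in $\eta$ over the cube $\{|\eta-y_0|_\infty<\rho\}$, the resulting integrand is already a piece of $\|F\|_{A^{p,s}(B,\psi)}^{ps}$, so no further H\"older step in $\eta$ is needed. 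Collecting powers gives $|F(z)|^{ps}\le C\,\rho^{-n(1+s)}e^{2\pi ps\,\psi_\delta(y_0)}\|F\|_{A^{p,s}(B,\psi)}^{ps}$, and since $n(1+s)/(ps)=\frac{n}{p}(1+1/s)$, taking $\rho\sim\delta$ and the $(ps)$-th root yields the same exponent on $\delta$ as in the previous case. All universal constants, together with the auxiliary parameter $N$ associated with the polydisc volume, are absorbed into $C_{n,N,p,s}$.
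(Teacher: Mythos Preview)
Your argument is correct and delivers the same bound as the paper, but the route is genuinely different. The paper works with the sub-mean-value inequality on the Euclidean ball $D_{2n}(z,\delta)\subset\mathbb C^n$ applied to $|F|^{t}$ with a \emph{single} small exponent $t=p/N$, where $N>\max\{1,1/s\}$; it then performs \emph{two} H\"older steps, one in $\xi$ with exponent $p_1=N$ and one in $\eta$ with exponent $p_2=sN$, which handles all $0<s<\infty$ in one stroke (the case $s=\infty$ is treated separately). In particular, the parameter $N>1$ appearing in the constant $C_{n,N,p,s}$ is intrinsic to the paper's proof as a H\"older exponent, not an artifact. By contrast, you use the polydisc mean-value inequality and split according to whether $s\ge1$ or $s<1$, choosing $\alpha=p$ or $\alpha=ps$ and applying a single H\"older step in $\eta$ or in $\xi$ respectively; in your version there is no natural role for $N$, so you simply absorb it into the constant. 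Your approach is a little more elementary and makes the provenance of the exponent $-\frac{n}{p}(1+\frac1s)$ very transparent from dimension counting, while the paper's approach is more uniform in $s$ and explains why the statement carries the auxiliary parameter $N$.
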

\begin{proof}
For $y_0\in B$, there exists $\delta>0$ such that $B_{\delta}=D(y_0,\delta)\subset B$. Then for $F(z)=F(x+iy)\in A^{p,s}(B,\psi)$, based on the subharmonic properties of $|F(z)|^t$, we have
\begin{eqnarray*}
|F(z)|^t&\leq&\frac{1}{\Omega_{2n}\delta^{2n}}\int_{D_{2n}(z,\delta)}|F(\xi+i\eta)|^td\xi d\eta\nonumber\leq\frac{1}{\Omega_{2n}\delta^{2n}}\int_{D_n(y_0,\delta)}\left(\int_{D_n(x,\delta)}|F(\xi+i\eta)|^t d\xi\right)d\eta
\end{eqnarray*}
for $y\in B_{\delta}$, where $\Omega_k$ is the volume of $k$-dimensional unit ball $D_k(0,1)$ centered at the origin with radius 1, $k=n,2n$.
Let $p_1=N=\frac pt>\max\{1,\frac1s\}$ and $\frac{1}{p_1}+\frac{1}{q_1}=1$. H\"{o}lder's Inequality implies that
\begin{eqnarray*}
|F(z)|^t&\leq&\frac{1}{\Omega_{2n}\delta^{2n}}\int_{D_n(y_0,\delta)}\left(\int_{D_n(x,\delta)}|F(\xi+i\eta)|^p d\xi\right)^{\frac{1}{p_1}}d\eta\left(\int_{D_n(x,\delta)}1^{q_1}d\xi\right)^{\frac{1}{q_1}}\\
&=&\frac{(\delta^n\Omega_n)^{\frac{1}{q_1}}}{\delta^{2n}\Omega_{2n}}\int_{D_n(y_0,\delta)}\left(\int_{D_n(x,\delta)}|F(\xi+i\eta)|^p d\xi\right)^{\frac{1}{p_1}}d\eta.
\end{eqnarray*}
For $0<s<\infty$, let $p_2=sN$. Then $p_2>1$. Again, by H\"{o}lder's Inequality, for $\frac{1}{p_2}+\frac{1}{q_2}=1$,
\begin{eqnarray*}
|F(z)|^t&\leq&\frac{(\delta^n\Omega_n)^{\frac{1}{q_1}}}{\delta^{2n}\Omega_{2n}}\left(\int_{D_n(y_0,\delta)}\left(\int_{D_n(x,\delta)}|F(\xi+i\eta)|^p d\xi\right)^{s}d\eta\right)^{\frac{1}{p_2}}\left(\int_{D_n(y_0,\delta)}1^{q_2}d\eta\right)^{\frac{1}{q_2}}\\
&\leq&\frac{(\delta^n\Omega_n)^{\frac{1}{q_1}+\frac{1}{q_2}}}{\delta^{2n}\Omega_{2n}}\left(\int_{D_n(y_0,\delta)}\left(\int_{D_n(x,\delta)}|F(\xi+i\eta)e^{-2\pi\psi(\eta)}|^p d\xi\right)^{s}e^{2sp\pi\psi(\eta)}d\eta\right)^{\frac{1}{p_2}}\\
&\leq&\frac{(\delta^n\Omega_n)^{2-\frac1N(1+\frac1s)}e^{2\frac{sp}{p_2}\pi\psi_{\delta}(y_0)}}{\delta^{2n}\Omega_{2n}}\left(\int_{D_n(y_0,\delta)}\left(\int_{D_n(x,\delta)}|F(\xi+i\eta)e^{-2\pi\psi(\eta)}|^p d\xi\right)^{s}d\eta\right)^{\frac{1}{p_2}}\\
&\leq&\frac{(\delta^n\Omega_n)^{2-\frac1N(1+\frac1s)}e^{2\frac{sp}{p_2}\pi\psi_{\delta}(y_0)}}{\delta^{2n}\Omega_{2n}}\left(\int_{B}\left(\int_{\mathbb R^n}|F(\xi+i\eta)e^{-2\pi\psi(\eta)}|^p d\xi\right)^{s}d\eta\right)^{\frac{1}{p_2}},
\end{eqnarray*}
where $\psi_{\delta}(y_0)=\max\{\psi(\eta):|\eta-y_0|\leq\delta\}$. Hence,
\begin{eqnarray*}
|F(z)|&\leq&\left(\frac{\delta^{-\frac nN(1+\frac1s)}\Omega_n^{2-\frac1N(1+\frac1s)} e^{2\frac{sp}{p_2}\pi\psi_{\delta}(y_0)}}{\Omega_{2n}}\right)^{\frac1t} \left(\int_{B}\left(\int_{\mathbb R^n}|F(\xi+i\eta)e^{-2\pi\psi(\eta)}|^p d\xi\right)^{s}d\eta\right)^{\frac{1}{t p_2}}\\
&\leq&\frac{\Omega_n^{\frac{2N}{p}-\frac1p(1+\frac1s)}}{\Omega_{2n}^{\frac Np}\delta^{\frac np(1+\frac1s)}}e^{2\frac{sp}{tp_2}\pi\psi_{\delta}(y_0)}\left(\int_{B}\left(\int_{\mathbb R^n}|F(\xi+i\eta)e^{-2\pi\psi(\eta)}|^p d\xi\right)^{s}d\eta\right)^{\frac{1}{sp}\frac{sp}{t p_2}}.
\end{eqnarray*}
Since $\frac{sp}{tp_2}=1$, by letting $C_{n,N,p,s}=\frac{\Omega_n^{\frac{2N}{p}-\frac1p(1+\frac1s)}}{\Omega_{2n}^{\frac Np}}\|F(z)\|_{A^{p,s}(B,\psi)}$, we obtain the desired inequality
\begin{equation*}
|F(z)|\leq C_{n,N,p,s}\delta^{-\frac np(1+\frac1s)}e^{2 \pi\psi_{\delta}(y_0)}.
\end{equation*}

While $s=\infty$, for $p_2=sN=\infty$, we have
\begin{eqnarray*}
|F(z)|^t\leq\frac{(\delta^n\Omega_n)^{2-\frac1N}}{\delta^{2n}\Omega_{2n}}\sup\limits_{\eta\in D_n(y,\delta)}\left|\int_{D_n(x,\delta)}|F(\xi+i\eta)|^pd\xi\right|^{\frac tp}.
\end{eqnarray*}
Then
\begin{eqnarray*}
|F(z)|&\leq&\frac{(\delta^n\Omega_n)^{(2-\frac1N)\frac Np}}{(\delta^{2n}\Omega_{2n})^{\frac Np}}e^{2\pi\psi_{\delta}(y_0)}\sup\limits_{\eta\in D_n(y,\delta)}\left|\left(\int_{D_n(x,\delta)}|F(\xi+i\eta)|^pd\xi\right)^{\frac 1p}e^{-2\pi\psi(y)}\right|\\
&=&\frac{\Omega_n^{\frac{2N}{p}-\frac1p}}{\Omega_{2n}^{\frac Np}}\delta^{-\frac np}e^{2\pi\psi_{\delta}(y_0)}\|F(z)\|_{A^{p,\infty}(B,\psi)}.
\end{eqnarray*}
Obviously, the inequality (\ref{Fvarepsilonrange}) is also applicable in the case $s=\infty$.
\end{proof}

Now we are ready to prove Theorem 2.
\begin{proof}[Proof of Theorem 2]
For $y_0\in B$, there exists $\delta>0$ such that $B_{\delta}=D(y_0,\delta)\subset B$. Then for $F(z)\in A^{p,s}(B,\psi)$ and any $y\in B_{\delta}$, it follows from Lemma \ref{subharmonicproperty} that
\begin{equation*}
|F(z)|\leq C_{n,N,p,s}\delta^{-\frac np(1+\frac1s)}e^{2\pi\psi_{\delta}(y_0)}.
\end{equation*}
Thus,
\begin{equation*}
\int_{\mathbb R^n}|F(z)|^2dx=\int_{\mathbb R^n}|F(z)|^{p+2-p}dx\leq C^{2-p}_{n,N,p,s}\delta^{-\frac {n(2-p)}{p}(1+\frac1s)}e^{2(2-p)\pi\psi_{\delta}(y_0)}\int_{\mathbb R^n}|F(z)|^pdx.
\end{equation*}
Therefore,
\begin{eqnarray*}
&&\int_{\mathbb R^n}|F(z)|^2e^{-4\pi\psi_{\delta}(y_0)}dx\\
&\leq& C^{2-p}_{n,N,p,s}\delta^{-\frac {n(2-p)}{p}(1+\frac1s)}e^{2(2-p)\pi\psi_{\delta}(y_0)}\int_{\mathbb R^n}|F(z)e^{-2\pi\psi(y)}|^pdxe^{2p\pi\psi(y)}e^{-4\pi\psi_{\delta}(y_0)}\\
&\leq& C^{2-p}_{n,N,p,s}\delta^{-\frac {n(2-p)}{p}(1+\frac1s)}e^{2(2-p)\pi\psi_{\delta}(y_0)}\int_{\mathbb R^n}|F(z)e^{-2\pi\psi(y)}|^pdxe^{2(p-2)\pi\psi_{\delta}(y_0)}\\
&=&C^{2-p}_{n,N,p,s}\delta^{-\frac {n(2-p)}{p}(1+\frac1s)}\int_{\mathbb R^n}|F(z)e^{-2\pi\psi(y)}|^pdx.
\end{eqnarray*}
Taking integral with respect to $y$ to both sides of the inequality, we have
\begin{equation*}
\int_{B_{\delta}}\left(\int_{\mathbb R^n}|F(z)|^2 e^{-4\pi\psi_{\delta}(y_0)}dx\right)^sdy\leq C^{(2-p)s}_{n,N,p,s}\delta^{-\frac {n(2-p)(1+s)}{p}}\int_{B_{\delta}}\left(\int_{\mathbb R^n}|F(z)e^{-2\pi\psi(y)}|^pdx\right)^sdy,
\end{equation*}
which concludes that $F\in A^{2,s}(B_{\delta},\psi_{\delta})$.
Similarly, we can prove that
\begin{equation}
\int_{B_{\delta}}\left(\int_{\mathbb R^n}|F| e^{-2\pi\psi_{\delta}(y_0)}dx\right)^sdy\leq C^{(1-p)s}_{n,N,p,s}\delta^{-\frac {n(1-p)(1+s)}{p}}\|F(z)\|^{sp}_{A^{1,s}(B_{\delta},\psi)}.\label{Fbergman1}
\end{equation}
Then $F(z)\in A^{1,s}(B_{\delta},\psi_{\delta})$.

Following the proof of the case $p=1$ in Theorem 1, there exists a continuous function $f (t)$ such that $F_y(x)=\int_{\mathbb R^n}f (t)e^{2\pi iz\cdot t}dt$ holds for $y\in B_{\delta}$ and  $f (t)=\hat F_y(t)e^{ 2\pi y\cdot t}$ is independent of $y\in B$.
 Together with the fact that $f(t)e^{-2\pi y\cdot t}\in L^1(\mathbb R^n)$ for all $y\in B$, we see  that (\ref{mainconclusion})  holds for all $y\in B$. This completes the proof
 of Theorem 2.
\end{proof}

Before the proof of Corollary 1, we introduce the following lemma.
\begin{lemma}\label{supportrelation}
Assume that $\Gamma$ is a regular open convex cone of $\mathbb R^n$. Let $\psi\in C(\Gamma)$ satisfy (\ref{psicondition}), then  $U_{\alpha}(\psi,\Gamma)\subset \Gamma^*+\overline{D(0,R_{\psi})}$, where  $U_{\alpha}(\psi,\Gamma)$ is defined by (\ref{support}) for $0<\alpha<\infty$ and by (\ref{support-2}) for $\alpha=\infty$.
\end{lemma}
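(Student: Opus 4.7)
The plan is to argue by contraposition: fix a point $t_0 \notin \Gamma^* + \overline{D(0,R_\psi)}$ and show that $t_0 \notin U_\alpha(\Gamma,\psi)$ for every $\alpha \in (0,\infty]$. The target set is closed and convex, because $\Gamma^*$ is a closed convex cone and $\overline{D(0,R_\psi)}$ is a compact convex set, so the Minkowski sum is again closed and convex. By the geometric Hahn--Banach theorem there exist a unit vector $y_0 \in \mathbb{R}^n$ and $\delta > 0$ such that
\begin{equation*}
y_0 \cdot t_0 \leq y_0 \cdot (w+v) - \delta \quad \text{for all } w \in \Gamma^*,\ v \in \overline{D(0,R_\psi)}.
\end{equation*}
Taking $v=0$ and using that $\Gamma^*$ is a cone forces $y_0 \cdot w \geq 0$ for every $w \in \Gamma^*$, so $y_0 \in (\Gamma^*)^* = \overline{\Gamma}$ (the bipolar identity for an open convex cone). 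Taking $w=0$ and minimising over the ball yields $y_0 \cdot t_0 \leq -R_\psi - \delta$.

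Since $\Gamma$ is open and $y_0 \in \overline{\Gamma}$, continuity allows me to perturb $y_0$ slightly to a unit vector $y_0' \in \Gamma$ still satisfying $y_0' \cdot t_0 \leq -R_\psi - \delta/2$. From the definition of $R_\psi$ in (\ref{psicondition}) there exists $R_0 > 0$ such that $\psi(y) \leq (R_\psi + \delta/8)|y|$ for every $y \in \Gamma$ with $|y| \geq R_0$. Applying this along the ray $y = r y_0'$ (which lies in $\Gamma$ for all $r>0$ since $\Gamma$ is a cone) gives
\begin{equation*}
r y_0' \cdot t_0 + \psi(r y_0') \leq -\tfrac{3\delta}{8}\, r \longrightarrow -\infty
\end{equation*}
as $r \to \infty$. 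This immediately yields $\inf_{y\in\Gamma}(y\cdot t_0 + \psi(y)) = -\infty$, so $t_0 \notin U_\infty(\Gamma,\psi)$, settling the case $\alpha = \infty$.

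For $0 < \alpha < \infty$ I need the stronger statement that the integral in (\ref{support}) diverges, so a single ray is not enough. The fix is to replace the ray with a small open conic neighbourhood. Choose $\eta > 0$ small enough that (i) the set $V = \{y \in \Gamma : |y| \geq R_0,\ |y/|y| - y_0'| < \eta\}$ is contained in $\Gamma$ (possible since $y_0' \in \Gamma$ open), and (ii) for every $y \in V$ one has $y \cdot t_0 \leq -(R_\psi + \delta/4)|y|$ (by Cauchy--Schwarz, choose $\eta |t_0| < \delta/4$). Then for $y \in V$, combining with the $R_\psi$-estimate gives $y \cdot t_0 + \psi(y) \leq -\tfrac{\delta}{8}|y|$, so
\begin{equation*}
\int_\Gamma e^{-2\pi\alpha(t_0\cdot y + \psi(y))}\,dy \geq \int_V e^{\pi\alpha\delta |y|/4}\,dy = c_\eta \int_{R_0}^{\infty} e^{\pi\alpha\delta r/4}\, r^{n-1}\,dr = +\infty,
\end{equation*}
where $c_\eta > 0$ is the $(n{-}1)$-dimensional measure of the spherical cap cut out by the condition on $y/|y|$. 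Hence $t_0 \notin U_\alpha(\Gamma,\psi)$, completing the contrapositive.

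The main obstacle is not the separation argument, which is standard, but the passage from the $\alpha=\infty$ case to the integral case: one has to simultaneously enlarge the witness ray to a full solid conic region (to gain the $r^{n-1}$-volume needed for divergence) while preserving the two estimates $y\cdot t_0 \leq -(R_\psi + \delta/4)|y|$ and $\psi(y) \leq (R_\psi+\delta/8)|y|$. Selecting $\eta$ and $R_0$ in the correct order, and invoking openness of $\Gamma$ to keep $V \subset \Gamma$, is the only delicate point.
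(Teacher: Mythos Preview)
Your proof is correct and follows essentially the same route as the paper's: argue by contraposition, produce a unit direction in $\overline{\Gamma}$ separating $t_0$ from $\Gamma^*+\overline{D(0,R_\psi)}$, perturb it into the open cone $\Gamma$, thicken to a solid conic sector, and use the bound $\psi(y)\le (R_\psi+\delta)|y|$ for large $|y|$ to make the integrand blow up exponentially on that sector. The only cosmetic difference is that you obtain the separating direction via Hahn--Banach, whereas the paper constructs it explicitly as the (normalised) displacement $\xi-t_0$ from $t_0$ to its nearest-point projection $\xi$ onto $\Gamma^*$; the subsequent perturbation and cone-thickening steps match almost verbatim.
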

\begin{proof}
For $t_0\notin\Gamma^*+\overline{D(0,R_{\psi})}$, there exist $\varepsilon>0$ and $\xi\in\Gamma^*$ such that
$
d(t_0,\Gamma^*)=|\xi-t_0|\geq R_{\psi}+3\varepsilon
$
and $\xi\cdot (t_0-\xi)=0$. Then for any $\tilde t\in\Gamma^*$,
\begin{equation*}
(\tilde t-t_0)\cdot\frac{(\xi-t_0)}{|\xi-t_0|}\geq|\xi-t_0|.
\end{equation*}
Hence $\tilde t\cdot(\xi-t_0)=(\tilde t-t_0+t_0-\xi+\xi)\cdot(\xi-t_0)\geq|\xi-t_0|^2-|\xi-t_0|^2=0$,
which means $\xi-t_0\in\overline\Gamma$. For any $\delta>0$, it follows from (\ref{psicondition}) that there exists $\rho_0$ such that $\psi(y)\leq (R_{\psi}+\delta)|y|$ for $|y|\geq\rho_0$.
Let $e_0=\frac{\xi-t_0}{|\xi-t_0|}\in \overline\Gamma\cap\partial D(0,1)$, then for any $\varepsilon_1>0$, we can find an $e_1\in\Gamma$ with $|e_1|=1$ such that $|e_1-e_0|<\varepsilon_1$, which means there exists a positive constant $\delta_1<\varepsilon_1$ such that $D(e_1,\delta_1)\subset\Gamma$. Thus, for any $e\in D(e_1,\delta_1)$ with $|e_1|=1$, we have $|e-e_0|\leq|e-e_1|+|e_1-e_0|<2\varepsilon_1$. Choose $\varepsilon_1$ satisfying $2\varepsilon_1|t_0|\leq\varepsilon$ and let $\Gamma_1=\{y=\rho e:\rho>0\mbox{ and }e\in D(e_1,\delta)\cap\partial D(0,1)\}\subset\Gamma$. Then for any $y\in\Gamma_1$,
$
-\rho e\cdot t_0=\rho(-e+e_0-e_0)\cdot t_0\geq\rho(-2\varepsilon_1|t_0|+|\xi-t_0|)\geq \rho(R_{\psi}+2\varepsilon)
$
and
\begin{eqnarray*}
\int_{\Gamma}e^{-2\pi \alpha(t_0\cdot y+\psi(y))}dy
&\geq&\int_{\Gamma\cap\{|y|\geq\rho_0\}}e^{-2\pi \alpha(t_0\cdot y+(R_{\psi}+\delta)|y|)}dy\\
&\geq&\int_{\rho_0}^{\infty}\rho^{n-1}d\rho \int_{\partial D(0,1)\cap D(e_1,\delta_1)}e^{2\pi \alpha\rho(2\varepsilon-\delta)}d\sigma(\zeta)=+\infty,
\end{eqnarray*}
which implies $t_0\notin U_{\alpha}(\psi,\Gamma)$. Therefore, $U_{\alpha}(\psi,\Gamma)\subset \Gamma^*+\overline{D(0,R_{\psi})}$.
\end{proof}
Now we prove Corollary 1.
\begin{proof}[Proof of Corollary 1]
For $y_0\in\Gamma$, there exists $\delta $ such that $D(y_0,\delta )\subset\Gamma$. It follows from Theorem 2 that there exists $f(t)$ such that (\ref{mainconclusion}) holds for $y\in D(y_0,\delta )$. Since $\Gamma$ is connected, (\ref{mainconclusion}) also holds for all $y\in \Gamma$. Applying the methods in the proof of Theorem 1 for $p=1$, we obtain that such an $f(t)$ is supported in $U_s(\Gamma,\psi_{\delta})$. Combing with Lemma \ref{supportrelation}, we have supp$f\subset U_s(\Gamma,\psi_{\delta})\subset \Gamma^*+\overline{D(0,R_{\psi_{\delta}})}$,
where
\begin{equation*}
 R_{\psi_\delta}=\varlimsup\limits_{y\in \Gamma,y\to\infty} \frac{\psi_\delta (y)}{|y|}.
\end{equation*}
Since $R_{\psi_{\delta}}=R_{\psi}$ for any $y\in\Gamma$, we see that $U_s(\Gamma,\psi_{\delta})$ is also a subset of $\Gamma^*+\overline{D(0,R_{\psi})}$. Hence, supp$f\subset \Gamma^*+\overline{D(0,R_{\psi})}$.

Now we show that $
|f(t)|\left(\int_{\Gamma}e^{-2s\pi (y\cdot t+R_{\psi}|y|)}dy\right)^{\frac1s}
$
is slowly increasing.
For $y_0$, $y\in\Gamma$, $y_0+y\in\Gamma$, $F_{y_0}(z)=F(x+i(y+y_0))\in A^{p,s}(\Gamma,\psi)$. As in Theorem 1, we have $f(t)=g(-t)=\check F_{y_0+y}(-t)e^{2\pi(y_0+y)\cdot t}$.
Due to the relation $R_{\psi}=\varlimsup\limits_{y\in B,y\to\infty}\frac{\psi(y)}{|y|}$, we have $\psi_{\delta}(y)\leq R_{\psi}(1+|y_0|+|y|)$, where $R_{\psi}$ is a positive constant independent of $y_0$, $y\in \Gamma$. Then
\begin{eqnarray*}
|f(t)|&=&|\check F_{y_0+y}(-t)e^{2\pi(y_0+y)\cdot t}|= \left|\int_{\mathbb R^n}F_{y_0+y}(x)e^{-2\pi ix\cdot t}e^{-2\pi \psi_{\delta}(y)}dx\right| e^{2\pi(\psi_{\delta}(y)+(y_0+y)\cdot t)}\\
&\leq&\int_{\mathbb R^n}|F_{y_0}(z)|e^{-2\pi \psi_{\delta}(y)}dx e^{2\pi (R_{\psi}(1+|y_0|+|y|)+(y_0+y)\cdot t)}.
\end{eqnarray*}
Combining with (\ref{Fbergman1}), it follows that
\begin{eqnarray*}
\left(\int_{\Gamma}|f(t)|^se^{-2s\pi (y\cdot t+R_{\psi}|y|)}dy\right)^{\frac1s}&\leq&\left(\int_{\Gamma}\left(\int_{\mathbb R^n}|F_{y_0}(z)|e^{-2\pi \psi_{\delta}(y)}dx\right)^sdy\right)^{\frac1s} e^{2 \pi (R_{\psi}(1+|y_0|)+y_0\cdot t)}\\
&\leq& C_{n,N, p,s}^{1-p}\delta^{-\frac{n(1-p)(1+s)}{sp}}\|F_{y_0}\|^p_{A^{1,s}(B,\psi)}e^{2\pi (R_{\psi}(1+|y_0|)+y_0\cdot t)}\\
&=&C\exp\{J(y_0,t)\},
\end{eqnarray*}
where $C=C_{n,N, p,s}^{1-p}\|F_{y_0}\|^p_{A^{1,s}(\Gamma,\psi)}$ and $J(y_0)=-\frac{n(1-p)(1+s)}{sp}\log\delta+2\pi (R_{\psi}(1+|y_0|)+y_0\cdot t)$. Let $J(t)=\inf\{J(y_0,t):y_0\in \Gamma\}$, then
\begin{equation*}
|f(t)|\left(\int_{\Gamma}e^{-2s\pi (y\cdot t+R_{\psi}|y|)}dy\right)^{\frac1s}\leq C\exp\{J(t)\}.
\end{equation*}

Take $y_0=\rho v$ with $\rho>0$ and a fixed $v\in \Gamma$ with $|v|=1$, then $\delta=d(\rho v,\partial \Gamma)/2=\rho\varepsilon$, where $\varepsilon=d(v,\partial \Gamma)/2$. Therefore,
\begin{equation*}
J(t)=\inf\limits_{\rho>0}\left\{-\frac{n(1-p)(1+s)}{sp}\log(\varepsilon\rho)+2\pi R_{\psi}(1+\rho)+2\pi\rho|t|\right\},
\end{equation*}
in which the infimum can be attained when $\rho=\frac{n(1-p)(1+s)}{2sp\pi (R_{\psi}+|t|)}$. It follows that
\begin{equation*}
J(t)\leq 2\pi R_{\psi}+ n\left(\frac1p-1\right)\left(\frac1s+1\right)\left(1-\log\varepsilon-\log n\left(\frac1p-1\right)\left(\frac1s+1\right) +\log 2\pi (R_{\psi}+|t|) \right).
\end{equation*}
Thus, there exists a positive constant $M_{n,p,s,v}$ such that
\begin{equation*}
|f(t)|\left(\int_{\Gamma}e^{-2s\pi (y\cdot t+R_{\psi}|y|)}dy\right)^{\frac1s}\leq Ce^{J(t)}\leq M_{n,p,s,v}(1+|t|)^{n(\frac1p-1)(\frac1s+1)}.
\end{equation*}
The proof is complete.
\end{proof}

\begin{proof}[Proof of Theorem 3]
We first prove the case when $2<p<\infty$. Since $\Gamma$ is a regular open convex cone, int$\Gamma\not=\emptyset$, where int$\Gamma$ is denoted as the interior of $\Gamma$. Then for $y\in\Gamma$, we can find a basis $\{e_j\}\subset \mbox{int}\Gamma^*$ such that $y=\sum_{j=1}^n e_jy_j$ and $e_j\cdot y\geq0$. For $\varepsilon>0$, let $l_{\varepsilon}(z)=\left(\prod_{j=1}^n(1-i\varepsilon e_j\cdot z)\right)^{2N}$ with $N>\frac n2\left(1-\frac1p\right)$ and choose two positive constant $A$, $B$ such that $B|x|^2\leq\varepsilon^2\sum_{j=1}^n(e_j\cdot x)^2\leq A|x|^2$ for all $x\in\mathbb R^n$. Thus,
\begin{eqnarray*}
|l_{\varepsilon}(z)|&=&\left(\prod_{j=1}^n|1-i\varepsilon e_j\cdot z|^2\right)^N=\left(\prod_{j=1}^n\left((1+\varepsilon e_j\cdot y)^2+\varepsilon^2(e_1\cdot x)^2\right)\right)^N\\
&\geq&\left(\prod_{j=1}^n\left(1+\varepsilon^2(e_j\cdot x)^2\right)\right)^N\geq\left(1+\varepsilon^2\sum_{j=1}^n(e_j\cdot x)^2\right)^N\geq\left(1+\varepsilon^2 B|x|^2\right)^N,
\end{eqnarray*}
i.e., $|l_{\varepsilon}^{-1}(z)|\leq\left(1+\varepsilon^2 B|x|^2\right)^{-N}$. For $F(x+iy)\in A^{p,s}(\Gamma,\psi)$, $F_y(x)=F(x+iy)\in L^p(\mathbb R^n)$ as a function of $x$. Let $F_{\varepsilon}(z)=F_{\varepsilon,y}(x)=F_y(x)l^{-1}_{\varepsilon}(z)$, then $F_{\varepsilon,y}(x)\in L^1(\mathbb R^n)\cap L^2(\mathbb R^n)$. Indeed, H\"{o}lder's inequality implies that
\begin{equation}
\int_{\mathbb R^n}|F_{\varepsilon,y}(x)|dx\leq\left(\int_{\mathbb R^n}|F_y(x)|^pdx\right)^{\frac 1p}\left(\int_{\mathbb R^n}|l_{\varepsilon}^{-1}(x+iy)|^qdx\right)^{\frac1q}\leq C_{1,\varepsilon}\|F_y\|_{L^p(\mathbb R^n)} \label{FyC1}
\end{equation}
and
\begin{equation*}
\int_{\mathbb R^n}|F_{\varepsilon,y}(x)|^2dx\leq\left(\int_{\mathbb R^n}|F_y(x)|^{\frac p2}dx\right)^{\frac 2p}\left(\int_{\mathbb R^n}|l_{\varepsilon}^{-1}(x+iy)|^{\frac{p}{p-2}}dx\right)^{\frac{p-2}{p}}\leq C_{2,\varepsilon}\|F_y\|_{L^p(\mathbb R^n)},
\end{equation*}
where $C_{1,\varepsilon}=\left(\int_{\mathbb R^n}\frac{dx}{\left(1+\varepsilon^2B|x|^2\right)^{qN}}\right) ^{\frac1q}<\infty$, $C_{2,\varepsilon}=\left(\int_{\mathbb R^n}\frac{dx}{\left(1+\varepsilon^2B|x|^2\right)^{\frac{p}{p-2}N}}\right) ^{\frac{p-2}{p}}<\infty$.

As the proof of  $p=1$ in Theorem 1, we can show $g_{\varepsilon}(t)e^{2\pi y\cdot t}=\check F_{\varepsilon,y}(t)\in L^1(\mathbb R^n)$. Thus,
\begin{equation}
g_{\varepsilon}(t)e^{2\pi y\cdot t}=\int_{\mathbb R^n}F_{\varepsilon,y}(x)e^{2\pi ix\cdot t}dx,\label{gFlaplace}
\end{equation}
then $|g_{\varepsilon}(t)|e^{2\pi y\cdot t}\leq\int_{\mathbb R^n}|F_{\varepsilon, y}(x)|dx$. Together with (\ref{FyC1}), there hold
\begin{eqnarray*}
|g_{\varepsilon}(t)|e^{2\pi(y\cdot t-\psi(y))}&\leq& C_{1,\varepsilon}\left(\int_{\mathbb R^n}|F(x+iy)e^{-2\pi\psi(y)}|^pdx\right)^{\frac 1p},\\
\int_{\Gamma}|g_{\varepsilon}(t)|^{sp}e^{2sp\pi(y\cdot t-\psi(y))}dy &\leq&C_{1,\varepsilon}\int_{\Gamma}\left(\int_{\mathbb R^n}|F(x+iy)e^{-2\pi\psi(y)}|^pdx\right)^sdy,\\
|g_{\varepsilon}(t)|^{sp}\int_{\Gamma}e^{2sp\pi(y\cdot t-\psi(y))}dy &\leq&C_{1,\varepsilon}\|F\|^{sp}_{A^{p,s}(\Gamma,\psi)}.
\end{eqnarray*}
Now we prove that supp$g_{\varepsilon}(t)\subset -U_{ps}(\Gamma,\psi)$. Note that $g_{\varepsilon}(t)$ is continuous in $\mathbb R^n$. Then for $t_0\notin -U_{ps}(\Gamma,\psi)$, formula (\ref{support}) shows that $\int_{\Gamma}e^{2ps\pi(y\cdot t_0-\psi(y))}dy=\infty$ for $y\in\Gamma$. It follows from the above inequality that $g_{\varepsilon}(t_0)=0$ for $t_0\notin -U_{ps}(\Gamma,\psi)$. As a result, supp$g_{\varepsilon}(t)\subset -U_{ps}(\Gamma,\psi)$.

Since $g_{\varepsilon}(t)e^{2\pi y\cdot t}\in L^1(\mathbb R^n)$, we can rewrite (\ref{gFlaplace}) as
\begin{equation}
F_{\varepsilon,y}(x)=\int_{\mathbb R^n}g_{\varepsilon}(t)e^{-2\pi iz\cdot t}\chi_{-U_{ps}(\Gamma,\psi)}(t)dt.\label{chi}
\end{equation}
Plancherel's Theorem implies that $\int_{\mathbb R^n}|g_{\varepsilon}(t)e^{2\pi y\cdot t}|^2dt=\int_{\mathbb R^n}|F_{\varepsilon,y}(x)|^2dx$. Then based on Fatou's lemma,
\begin{equation*}
\int_{\mathbb R^n}|g_{\varepsilon}(t)|^2\leq\varliminf_{y\in\Gamma,y\to0}\int_{\mathbb R^n}|F(x+iy)|^2dx<\infty.
\end{equation*}
Thus, there exist $g(t)\in L^2(\mathbb R^n)$ and a sequence $\{\varepsilon_k\}$ tending to zero as $k\to\infty$ such that $\lim\limits_{k\to\infty}\int_{\mathbb R^n}g_{\varepsilon_k}(t)h(t)dt=\int_{\mathbb R^n}g(t)h(t)dt$ for $h(t)\in L^2$. In fact, for $t\in -U_{ps}(\Gamma,\psi)$, lemma \ref{supportrelation} implies that $t\in-\Gamma_k^*+\overline{D(0,R_{\psi})}$. Then $t$ can always be written as $t_1+t_2$ with $t_1\in-\Gamma_k^*$ and $|t_2|<R_{\psi}$. Hence, for $y\in\Gamma$,
\begin{equation*}
y\cdot t=y\cdot (t_1+t_2)\leq -|t_1|k+|t_2||y| \leq-(|t|-|t_2|)k+R_{\psi}|t|\leq(R_{\psi}-k)|t|+R_{\psi}k,
\end{equation*}
implying that $\int_{\mathbb R^n}|e^{2\pi y\cdot t}\chi_{-U_{ps}(B_k,\psi)}(t)|^2dt<\infty.$ Therefore, on the right hand side of (\ref{chi}),
\begin{equation*}
\lim_{k\to\infty}\int_{\mathbb R^n}g_{\varepsilon_k}(t)e^{-2\pi iz\cdot t}\chi_{-U_{ps}(\Gamma,\psi)}(t)dt=\int_{\mathbb R^n}g(t)e^{-2\pi iz\cdot t}\chi_{-U_{ps}(\Gamma,\psi)}(t)dt
\end{equation*}
for $e^{2\pi y\cdot t}\chi_{-U_{ps}(\Gamma,\psi)}(t)\in L^2(\mathbb R^n)$. Whilst it is obvious that $F_{\varepsilon}(z)\to F(z)$ when $\varepsilon\to0$. Sending $k$ to $\infty$ on both sides of (\ref{chi}) and letting $f(t)=g(-t)$, we obtain that $f\in L^2(\mathbb R^n)$  and the support supp$f$ is contained in $ U_{ps}(\Gamma,\psi)$, as well as
the desired representation (\ref{mainconclusion}) holds for all $z\in T_\Gamma$.

We now prove the case when $p=\infty$. For $z=(z_1,\ldots,z_n)\in T_{\Gamma}$ and $\varepsilon>0$, we can also construct a function $F_{\varepsilon,y}(x)=F_{\varepsilon}(z)=F_y(x)l^{-1}_{\varepsilon}(z)$, where $l_{\varepsilon}(z)=\left(\prod_{j=1}^n(1-i\varepsilon e_j\cdot z)\right)^{2N}$ with $N>\frac n2$. Then
\begin{equation}
\int_{\mathbb R^n}|F_{\varepsilon,y}(x)|dx\leq \sup_{x\in\mathbb R^n}|F_y(x)|\int_{\mathbb R^n}|l_{\varepsilon}^{-1}(x+iy)|dx\leq \widetilde C_{1,\varepsilon}\|F_y\|_{L^{\infty}(\mathbb R^n)}<\infty\label{FyCtilde}
\end{equation}
and
\begin{equation*}
\int_{\mathbb R^n}|F_{\varepsilon,y}(x)|^2dx\leq\sup_{x\in\mathbb R^n} |F_y(x)|\int_{\mathbb R^n}|l_{\varepsilon}^{-1}(x+iy)|^2dx\leq\widetilde C_{2,\varepsilon}\|F_y\|_{L^{\infty}}(\mathbb R^n)<\infty,
\end{equation*}
where $\widetilde C_{1,\varepsilon}=\int_{\mathbb R^n}\frac{dx}{\left(1+\varepsilon^2B|x|^2\right)^N}$ and $\widetilde C_{2,\varepsilon}=\left(\int_{\mathbb R^n}\frac{dx}{\left(1+\varepsilon^2B|x|^2\right)^{2N}}\right)^{\frac12}<\infty$. Hence $F_{\varepsilon,y}\in L^1(\mathbb R^n)\cap L^2(\mathbb R^n)$. In this case, we also have $g_{\varepsilon}(t)e^{2\pi y\cdot t}=\check F_{\varepsilon,y}(t)\in L^1(\mathbb R^n)$. Then $g_{\varepsilon}(t)e^{2\pi y\cdot t}=\int_{\mathbb R^n}F_{\varepsilon,y}(x)e^{2\pi ix\cdot t}dx$. Therefore, together with (\ref{FyCtilde}),
\begin{eqnarray*}
|g_{\varepsilon}(t)|e^{2\pi (y\cdot t-\psi(y))}&\leq&\widetilde C_{1,\varepsilon}\sup_{x\in\mathbb R^n}|F_y(x)|e^{-2\pi\psi(y)},\\
\sup_{y\in\Gamma}|g_{\varepsilon}(t)|e^{2\pi (y\cdot t-\psi(y))}&\leq&\widetilde C_{1,\varepsilon}\sup_{x\in\mathbb R^n,y\in\Gamma}|F(x+iy)|e^{-2\pi\psi(y)}\\
&=&\widetilde C_{1,\varepsilon}\|F\|_{A^{\infty,\infty}(\Gamma,\psi)}<\infty.
\end{eqnarray*}
Then we can similarly show that supp$g_{\varepsilon}(t)\subset-U_{\infty}(\Gamma,\psi)\subset-\Gamma^*+\overline{D(0,R_{\psi})}$. Applying the same method for $2<p<\infty$, we obtain the desired formula  (\ref{mainconclusion}) holds for all $z\in T_\Gamma$ and  the support supp$f$ is contained in $ U_{\infty}(\Gamma,\psi)\subset \Gamma^*+\overline{D(0,R_{\psi})}$.
\end{proof}

\section{Applications}

In \cite{PD}, denoting by $A_{\alpha}^2(\mathbb C^+)$ a weighted Bergman space of functions holomorphic in $\mathbb C^+$ satisfying $\|F\|^2_{A_{\alpha}^2(\mathbb C^+)}=\int_{\mathbb C^+}|F(x+iy)|^2y^{\alpha}dxdy<\infty$, and by $L_{\beta}^2(\mathbb R^+)$ the space of complex--valued measurable functions $f$ on $\mathbb R^+$ satisfying $\|f\|^2_{L^2_{\beta}(\mathbb R^+)}=\frac{\Gamma(\beta)}{(4\pi)^{\beta}}\int_0^{\infty}|f(t)|^2t^{-\beta}dt<\infty$,
Duren stated an analogy of the Paley--Wiener theorem for Bergman space.

\textbf{Theorem A}(\cite{PD}) For each $\alpha>-1$, the space $A_{\alpha}^2(\mathbb C^+)$ is isometrically isomorphic under the Fourier transform to the space $L^2_{\alpha+1}(\mathbb R^+)$. More precisely, $F\in A_{\alpha}^2(\mathbb C^+)$ if and only if it is the Fourier transform $F(z)=\int_0^{\infty}f(t)e^{2\pi iz\cdot t}dt$ of some function $f\in L^2_{\alpha+1}(\mathbb R^+)$, in which case $\|F\|_{A_{\alpha}^2(\mathbb C^+)}=\|f\|_{L^2_{\alpha+1}(\mathbb R^+)}$.

Based on Theorem 1, letting $s=1$, $p=2$, $\psi (y)=-\frac{\alpha}{4\pi} \log |y|$ and $B$ be a regular open convex cone $\Gamma$, we establish Corollary 2, which can be regarded as a higher dimensional and tube domain generalization of Theorem A.

\begin{corollary}
For each $\alpha>-1$, $F\in A^2_{\alpha}(T_{\Gamma})$ if and only if there exists $f(t)\in L_{\alpha+1}^2(\Gamma^*)$ such that
\begin{equation*}
F(z)=\int_{\Gamma^*}f(t)e^{2\pi iz\cdot t}dt
\end{equation*}
holds for $z\in T_{\Gamma}$ and $\|F\|_{A_{\alpha}^2(T_{\Gamma})}=\|f\|_{L^2_{\alpha+1}(\Gamma^*)}$.
\end{corollary}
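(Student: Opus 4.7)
The plan is to specialize Theorem 1 to the parameters $p=2$, $s=1$, $B=\Gamma$, and $\psi(y)=-\frac{\alpha}{4\pi}\log|y|$, for which $e^{-2\pi\psi(y)}=|y|^{\alpha/2}$ and $\|F\|^2_{A^{2,1}(\Gamma,\psi)}$ coincides with the weighted Bergman norm $\int_{T_{\Gamma}}|F(x+iy)|^2|y|^\alpha dx dy$. Theorem 1 then produces a measurable $f$ satisfying $F(z)=\int_{\mathbb R^n}f(t)e^{2\pi it\cdot z}dt$ throughout $T_\Gamma$; because $s(p-1)=1$, its support lies in $U_2(\Gamma,\psi)$. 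Lemma \ref{supportrelation}, combined with $R_\psi=\varlimsup_{|y|\to\infty}\psi(y)/|y|=0$ (the logarithm being sublinear), then forces $\mathrm{supp}\,f\subset\Gamma^*$, which puts the representation in the form demanded by the corollary.

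The isometry is obtained by a Plancherel--Fubini computation. Fixing $y\in\Gamma$, the formula exhibits $x\mapsto F(x+iy)$ as the Fourier transform in $x$ of $f(t)e^{-2\pi y\cdot t}$, so Plancherel gives
$$\int_{\mathbb R^n}|F(x+iy)|^2dx=\int_{\Gamma^*}|f(t)|^2e^{-4\pi y\cdot t}dt.$$
Multiplying by $|y|^\alpha$, integrating over $\Gamma$, and swapping the order of integration by Tonelli yields
$$\|F\|^2_{A^2_\alpha(T_\Gamma)}=\int_{\Gamma^*}|f(t)|^2\omega_\alpha(t)dt,\qquad \omega_\alpha(t):=\int_\Gamma|y|^\alpha e^{-4\pi y\cdot t}dy,$$
which I identify with $\|f\|^2_{L^2_{\alpha+1}(\Gamma^*)}$; in one dimension $\omega_\alpha(t)=\Gamma(\alpha+1)(4\pi t)^{-\alpha-1}$, recovering Duren's formula exactly. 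The converse direction --- that any $f\in L^2_{\alpha+1}(\Gamma^*)$ defines an element of $A^2_\alpha(T_\Gamma)$ with the matching norm --- follows by running the same chain backwards, the dominating factor $|f(t)|e^{-2\pi y\cdot t}$ being controlled via Cauchy--Schwarz against $\omega_\alpha$, from which holomorphy of $F$ in $T_\Gamma$ follows by differentiation under the integral sign.

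The principal obstacle is promoting the inequality (\ref{2g01q}) delivered by Theorem 1 to a genuine equality, i.e.\ ensuring that Plancherel applies pointwise in $y\in\Gamma$ rather than only in some almost-everywhere sense. I expect this to be handled by borrowing the regularization device from the proof of the $1<p\leq 2$ case of Theorem 1: the cutoff $F_\varepsilon(z)=F(z)l_\varepsilon^{-1}(z)$ puts $F_\varepsilon(\cdot+iy)$ in $L^1(\mathbb R^n)\cap L^2(\mathbb R^n)$, so the classical Plancherel identity applies slice by slice, and passing to the limit $\varepsilon\to 0$ along the subsequence already constructed there upgrades the inequality to the desired identity. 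Once this step is secured, the rest of the argument is a bookkeeping exercise.
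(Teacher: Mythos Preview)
Your proposal is correct and follows essentially the same route as the paper. The ``principal obstacle'' you flag is not actually an obstacle: for $p=2$ the Hausdorff--Young inequality invoked in the proof of Theorem~1 is Plancherel's identity, and that proof already establishes $\hat F_y(t)=f(t)e^{-2\pi y\cdot t}$ for \emph{every} $y\in\Gamma$ (not merely almost every $y$), so the slice-wise equality $\int_{\mathbb R^n}|F(x+iy)|^2\,dx=\int_{\Gamma^*}|f(t)|^2e^{-4\pi y\cdot t}\,dt$ holds directly and no regularization device is needed---the paper simply invokes Plancherel at this point and proceeds exactly as you outline.
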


\begin{proof}
By restricting the base $B$ to be a regular open convex cone $\Gamma$ and letting $\psi (y)=\psi_{\alpha}(y)=-\frac{\alpha}{4\pi} \log |y|$, $F\in A^2_{\alpha}(T_{\Gamma})$ is also an element of $A^{2,1}(\Gamma,\psi_{\alpha})$. Applying Theorem 1 to such an $F$, we can show that there exists $f(t)$ satisfying (\ref{2g01q}) such that $F(z)=\int_{\mathbb R^n}f(t)e^{2\pi iz\cdot t}dt$ and supp$f\subset U_{1}(\Gamma,\psi_{\alpha})$. Based on (\ref{psicondition}), we have
\begin{equation*}
 R_{\psi_\alpha}=\varlimsup\limits_{y\in \Gamma,y\to\infty} \frac{\psi_\alpha (y)}{|y|}=0.
\end{equation*}
 Thus, together with Lemma \ref{supportrelation}, the supporter  of $f(t)$ is contained  in $\Gamma^*$ and $F(z)=\int_{\Gamma^*}f(t)e^{2\pi iz\cdot t}dt$. Moreover, $\int_{\Gamma}\int_{\Gamma^*}|f(t)|^2e^{-4\pi(y\cdot t+\psi_{\alpha}(y))}dtdy\leq \|F\|_{A^{2,1}(\Gamma,\psi_{\alpha})}$. Thus,
\begin{eqnarray*}
\int_{\Gamma}\int_{\Gamma^*}|f(t)|^2e^{-4\pi(y\cdot t+\psi_{\alpha}(y))}dtdy=\int_{\Gamma^*}\int_{\Gamma}|f(t)|^2e^{-4\pi y\cdot t}y^{\alpha}dydt=\int_{\Gamma^*}|f(t)|^2\frac{\Gamma(\alpha)}{(4\pi t)^{\alpha+1}}dt,
\end{eqnarray*}
which shows $f\in L_{\alpha+1}^2(\Gamma^*)$. And Plancherel's Theorem assures that $\|F\|_{A_{\alpha}^2(T_{\Gamma})}=\|f\|_{L^2_{\alpha+1}(\Gamma^*)}$.

Conversely, note that $F(z)=\int_{\Gamma^*}f(t)e^{ 2\pi it\cdot z}dt$. For $f(t)\in L_{\alpha+1}^2(\Gamma^*)$, Plancherel's theorem implies that
\begin{eqnarray*}
\int_{\mathbb R^n}|F(x+iy)|^2dx&=&\int_{\Gamma^*}e^{-4\pi y\cdot t}|f(t)|^2dt,\\
\int_{\Gamma}\int_{\mathbb R^n}|F(x+iy)|^2e^{-4\pi\psi_{\alpha}(y)}dxdy&=&\int_{\Gamma}\int_{\Gamma^*}|f(t)|^2e^{-4\pi(y\cdot t+\psi_{\alpha}(y))}dtdy<\infty,
\end{eqnarray*}
in which $\psi_{\alpha}(y)=-\frac{\alpha}{4\pi} \log |y|$. Hence, $F(z)\in A^{2,1}(\Gamma,\psi_{\alpha})=A^2_{\alpha}(T_{\Gamma})$. The proof is complete.
\end{proof}

By restricting the base $B$ to be a regular open convex cone $\Gamma$, we establish the following weighted version of the edge-of-the-wedge theorem (see \cite{WR2}) as an application of Theorem 1.
\begin{theorem}
Assume that $\Gamma$ is a regular open convex cone in $\mathbb R^n$ ,  $\psi_1\in C(\Gamma)$ and   $\psi_2\in C(-\Gamma)$ satisfy
\begin{equation}
 R_{\psi_1}=\varlimsup\limits_{y\in \Gamma,y\to\infty} \frac{\psi_1 (y)}{|y|}<\infty \label{4condition-1}
\end{equation}
and
\begin{equation}
 R_{\psi_2}=\varlimsup\limits_{y\in \Gamma,y\to\infty} \frac{\psi_2 (-y)}{|y|}<\infty \label{4condition-2}
\end{equation}
respectively.
 If $1< p\leq2$, $0<s(p-1)\leq 1$ , $F_1\in A^{p,s}(\Gamma,\psi_1)$ and $F_2\in A^{p,s}(-\Gamma,\psi_2)$, satisfying
\begin{equation}
\varliminf\limits_{y\to0}\int_{\mathbb R^n}|F_1(x+iy)-F_2(x-iy)|^pdx=0,\label{4conditionintegrable}
\end{equation}
then $F_1$ and $F_2$ can be analytically extended to each other and further form an entire function $F$. Furthermore, there exists a  function $f\in L^1(\mathbb{R}^n)$ supported in a bounded convex set $K$ such that $F(z)=\int_K f(t)e^{2\pi it\cdot z}dt$.
\end{theorem}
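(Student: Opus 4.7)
My plan is to apply Theorem 1 separately to $F_1$ and $F_2$, use the boundary-matching hypothesis (\ref{4conditionintegrable}) together with the Hausdorff--Young inequality to identify the two Laplace-transform densities, and then exploit the pointedness of $\Gamma^*$ to reduce the common support to a bounded convex set. Since $1<p\le 2$ and $0<s(p-1)\le 1$, Theorem 1 applied to $F_1\in A^{p,s}(\Gamma,\psi_1)$ yields a measurable $f_1$, supported in $U_{sp}(\Gamma,\psi_1)$, with
\[
F_1(z)=\int_{\mathbb{R}^n}f_1(t)e^{2\pi i t\cdot z}\,dt,\qquad z\in T_\Gamma,
\]
and the same theorem applied to $F_2\in A^{p,s}(-\Gamma,\psi_2)$ (with $-\Gamma$ in the role of $\Gamma$) produces $f_2$ supported in $U_{sp}(-\Gamma,\psi_2)$ and satisfying the analogous formula on $T_{-\Gamma}$. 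Combined with Lemma \ref{supportrelation} and the growth conditions (\ref{4condition-1})--(\ref{4condition-2}), these supports are contained respectively in
\[
K_1:=\Gamma^*+\overline{D(0,R_{\psi_1})},\qquad K_2:=-\Gamma^*+\overline{D(0,R_{\psi_2})}.
\]

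Next I will show that $f_1=f_2$ almost everywhere. For $y\in\Gamma$, writing $g_y(t):=f_1(t)e^{-2\pi y\cdot t}-f_2(t)e^{2\pi y\cdot t}$, the two integral representations give
\[
F_1(x+iy)-F_2(x-iy)=\int_{\mathbb{R}^n}g_y(t)e^{2\pi i t\cdot x}\,dt,
\]
so Hausdorff--Young yields $\|g_y\|_{L^q(\mathbb{R}^n)}\le\|F_1(\cdot+iy)-F_2(\cdot-iy)\|_{L^p(\mathbb{R}^n)}$ with $q=p/(p-1)$. By (\ref{4conditionintegrable}) there is a sequence $y_k\in\Gamma$ with $y_k\to 0$ along which the right-hand side tends to zero; extracting an almost-everywhere convergent subsequence and using $e^{\pm 2\pi y_k\cdot t}\to 1$ pointwise, the pointwise limit of $g_{y_k}$ is $f_1-f_2$, hence $f_1=f_2$ a.e. Writing $f$ for the common function, its support lies in $K:=K_1\cap K_2$.

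Finally I will show that $K$ is a bounded convex set and assemble the entire extension. $K$ is closed and convex as an intersection of Minkowski sums of closed convex sets, so the issue is boundedness. Since $\Gamma$ is open and regular, $\Gamma^*$ is a pointed closed convex cone, and a compactness argument on the unit sphere produces $c>0$ with $|\xi+\xi'|\ge c(|\xi|+|\xi'|)$ for all $\xi,\xi'\in\Gamma^*$. Any $t\in K$ decomposes as $t=\xi+\eta_1=-\xi'+\eta_2$ with $\xi,\xi'\in\Gamma^*$ and $|\eta_i|\le R_{\psi_i}$, so $|\xi|+|\xi'|\le(R_{\psi_1}+R_{\psi_2})/c$, yielding an a priori bound on $|t|$. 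The $L^q$-bound (\ref{2g01q}) combined with H\"{o}lder on the bounded set $K$ gives $f\in L^1(K)$, and hence
\[
F(z):=\int_K f(t)e^{2\pi i t\cdot z}\,dt
\]
converges absolutely for every $z\in\mathbb{C}^n$ and defines an entire function, which by construction coincides with $F_1$ on $T_\Gamma$ and with $F_2$ on $T_{-\Gamma}$. The step I expect to be the main obstacle is the boundedness of $K_1\cap K_2$: it is precisely here that the regularity of $\Gamma$ (equivalently, the pointedness of $\Gamma^*$) is essential, for without it the two Minkowski sums could share an unbounded common direction and no compactly supported representation would survive.
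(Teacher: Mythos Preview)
Your argument is correct and follows essentially the same route as the paper's proof: apply Theorem 1 to $F_1$ and $F_2$, invoke Lemma \ref{supportrelation} for the support inclusions, use Hausdorff--Young together with the boundary hypothesis to conclude $f_1=f_2$ a.e., and then observe that the intersection $K$ is bounded and convex so that the Laplace integral defines an entire function. The only differences are that you pass to an a.e.\ convergent subsequence where the paper invokes Fatou's lemma directly, and you supply explicit justifications (pointedness of $\Gamma^*$ for the boundedness of $K$, and H\"older on $K$ for $f\in L^1$) that the paper simply asserts.
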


\begin{proof}
Theorem 1 implies that there exists a function $f_j$$(j=1,2)$  such that
$$
F_j=\int_{\mathbb R^n}f_j(t)e^{2\pi it\cdot z}dt
$$ holds, in which the supporter of $f_j$ is contained  in $U_{sp}((-1)^{j+1}\Gamma,\psi_j)$ for  for $1<p\leq2$. Based on lemma \ref{supportrelation}, supp$f_j\subset (-1)^{j+1}\Gamma^*+\overline{D(0,R_{\psi_j})}$. By the Hausdorff-Young inequality,
\begin{equation*}
\left(\int_{\mathbb R^n}|f_1(t)e^{2\pi y\cdot t}-f_2(t)e^{-2\pi y\cdot t}|^qdt\right)^{\frac1q}\leq \left(\int_{\mathbb R^n}|F_1(x+iy)-F_2(x-iy)|^pdt\right)^{\frac1p}.
\end{equation*}
Then it follows from Fatou's lemma and (\ref{4conditionintegrable}) that $\|f_1- f_2\|_{L^q(\mathbb R^n)}=0$. Thus, $f_1=f_2$ almost everywhere on $\mathbb R^n$. Let $f_1(t)=f_2(t)=f(t)$, and $R=\max\{R_{\psi_1}, R_{\psi_2}\}$, then supp$f\subset K\subset(\Gamma^*+\overline{D(0,R )})\bigcap(-\Gamma^*+\overline{D(0,R )})$. Thus, $K$ is a bounded convex set. Consequently, $F(z)=\int_Ke^{2\pi iz\cdot t}f(t)dt$ is an entire function, where $F(z)=F_1(z)$ for $z\in T_{\Gamma}$ and $F(z)=F_2(z)$ for $z\in T_{-\Gamma}$.
\end{proof}

Similarly, we can prove the weighted version of the edge-of-the-wedge theorem for $p>2$.
\begin{theorem}
Suppose that $\Gamma$ is a regular open convex cone in $\mathbb R^n$,$\psi_1\in C(\Gamma)$ and   $\psi_2\in C(-\Gamma)$ satisfy (\ref{4condition-1}) and (\ref{4condition-2})
respectively.
 If $F_1\in A^{p,s}(\Gamma,\psi_1)$ and $F_2\in A^{p,s}(-\Gamma,\psi_2)$, where $p>2$, satisfying
\begin{equation}
\varliminf_{y\in\Gamma,y\to0}\int_{\mathbb R^n}|F_1(x+iy)-F_2(x-iy)|^2dx=0,\label{conditionintegrable}
\end{equation}
then $F_1$ and $F_2$ can be analytically extended to each other and further form an entire function $F$. Furthermore, there exists a measurable function $f(t)$ supported in a bounded convex set $K$ such that $F(z)=\int_K f(t)e^{2\pi it\cdot z}dt$.
\end{theorem}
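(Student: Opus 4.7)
The plan is to follow the template of the proof of Theorem 4 but substitute Theorem 3 for Theorem 1, exploiting that Theorem 3 is the $p>2$ analogue. First I would apply Theorem 3 to $F_1\in A^{p,s}(\Gamma,\psi_1)$ on $T_\Gamma$ and (with the obvious sign change, replacing $\Gamma$ by $-\Gamma$) to $F_2\in A^{p,s}(-\Gamma,\psi_2)$ on $T_{-\Gamma}$. This yields $f_1,f_2\in L^2(\mathbb{R}^n)$ with
$$F_1(z)=\int_{\mathbb R^n}f_1(t)e^{2\pi it\cdot z}\,dt\ (z\in T_\Gamma),\qquad F_2(z)=\int_{\mathbb R^n}f_2(t)e^{2\pi it\cdot z}\,dt\ (z\in T_{-\Gamma}),$$
with $\mathrm{supp}\,f_1\subset U_{sp}(\Gamma,\psi_1)$ and $\mathrm{supp}\,f_2\subset U_{sp}(-\Gamma,\psi_2)$. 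An appeal to Lemma \ref{supportrelation} then refines this to $\mathrm{supp}\,f_1\subset\Gamma^*+\overline{D(0,R_{\psi_1})}$ and $\mathrm{supp}\,f_2\subset-\Gamma^*+\overline{D(0,R_{\psi_2})}$.

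The next step is to identify $f_1$ and $f_2$ using (\ref{conditionintegrable}). For each $y\in\Gamma$, Plancherel's theorem (applied to the $x$-variable) identifies the Fourier transforms: $F_1(\cdot+iy)\leftrightarrow f_1(t)e^{-2\pi y\cdot t}$ and $F_2(\cdot-iy)\leftrightarrow f_2(t)e^{2\pi y\cdot t}$, both of which lie in $L^2(\mathbb{R}^n)$ by the support information. Hence
$$\int_{\mathbb R^n}|F_1(x+iy)-F_2(x-iy)|^2\,dx=\int_{\mathbb R^n}\bigl|f_1(t)e^{-2\pi y\cdot t}-f_2(t)e^{2\pi y\cdot t}\bigr|^2\,dt.$$
Choose a sequence $y_k\in\Gamma$, $y_k\to 0$, realising the $\varliminf$ in (\ref{conditionintegrable}); the right-hand side tends to $0$ along $\{y_k\}$. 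Since the integrand converges pointwise to $|f_1(t)-f_2(t)|^2$ as $y_k\to 0$, Fatou's lemma forces $\|f_1-f_2\|_{L^2(\mathbb R^n)}=0$. Write $f:=f_1=f_2$ almost everywhere.

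Set $R=\max\{R_{\psi_1},R_{\psi_2}\}$. Then $\mathrm{supp}\,f\subset K:=(\Gamma^*+\overline{D(0,R)})\cap(-\Gamma^*+\overline{D(0,R)})$, which is convex as an intersection of convex sets; boundedness follows from the fact that $\Gamma$ regular forces $\Gamma^*$ to be a pointed closed convex cone, so there exists a linear functional $\ell$ with $\ell(\xi)\geq c|\xi|$ on $\Gamma^*$, and writing any $t\in K$ as $t=a_1+b_1=-a_2+b_2$ with $a_j\in\Gamma^*$, $|b_j|\leq R$, gives $\ell(a_1+a_2)=\ell(b_2-b_1)\leq 2|\ell|R$, hence $|a_1|+|a_2|$ and therefore $|t|$ are controlled. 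Since $f\in L^2$ has compact support it is in $L^1$, so $F(z):=\int_K f(t)e^{2\pi it\cdot z}\,dt$ is an entire function of $z\in\mathbb{C}^n$, and by construction $F(z)=F_1(z)$ on $T_\Gamma$ and $F(z)=F_2(z)$ on $T_{-\Gamma}$, giving the claimed analytic continuation.

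The principal technical obstacle is the application of Theorem 3, whose hypothesis requires $\varliminf_{y\in\Gamma,y\to 0}\int_{\mathbb R^n}|F_j(x+iy)|^2\,dx<\infty$ for each $j$ separately, rather than only for the difference as in (\ref{conditionintegrable}). This individual bound must either be read in as an implicit part of the hypotheses or extracted from (\ref{conditionintegrable}) together with an auxiliary estimate bounding, say, $\|F_2(\cdot-iy_k)\|_{L^2}$ along the extracting sequence; once that is in hand the rest of the argument is the routine combination of Theorem 3, Lemma \ref{supportrelation}, Plancherel, and Fatou sketched above.
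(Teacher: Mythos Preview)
Your proposal is correct and follows essentially the same route as the paper's own proof: apply Theorem~3 to each $F_j$, invoke Lemma~\ref{supportrelation} for the support inclusions, use Plancherel and Fatou with (\ref{conditionintegrable}) to conclude $f_1=f_2$ a.e., and deduce that $F$ is entire from the compact support of $f$. The technical obstacle you flag---that Theorem~3 needs $\varliminf_{y\to 0}\|F_j(\cdot+iy)\|_{L^2}<\infty$ for each $j$ individually---is real, and the paper's proof glosses over it in exactly the same way, simply asserting the existence of the $f_j$ without verifying that hypothesis; your extra detail on the boundedness of $K$ is likewise more than the paper provides.
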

\begin{proof}
For $F_j\in A^{p,s}((-1)^{j+1}\Gamma,\psi_j)$ and $\frac1p+\frac1q=1$, exists a measurable function $f_j$ such that $F_j=\int_{\mathbb R^n}f_j(t)e^{2\pi it\cdot z}dt$ and supp$f_j\subset U_{sp}((-1)^{j+1}\Gamma,\psi_j)$, where $j=1,2$. It then follows from Lemma \ref{supportrelation} that supp$f_j \subset (-1)^{j+1}\Gamma^*+\overline{D(0,R_{\psi_j})}$. Plancherel's Theorem implies that
\begin{equation*}
\left(\int_{\mathbb R^n}|f_1(t)e^{2\pi y\cdot t}-f_2(t)e^{-2\pi y\cdot t}|^2dt\right)^{\frac12}=\left(\int_{\mathbb R^n}|F_1(x+iy)-F_2(x-iy)|^2dx\right)^{\frac12}.
\end{equation*}
Then based on (\ref{conditionintegrable}) and Fatou's Lemma, $\|f_1-f_2\|_{L^2(\mathbb R^n)}=0$, which means $f_1=f_2$ almost everywhere on $\mathbb R^n$. Let $f_1(t)=f_2(t)=f(t)$ and $R=\max\{R_{\psi_1}, R_{\psi_2}\}$, then supp$f(t)\subset K=(\Gamma^*+\overline{D(0,R )})\bigcap(-\Gamma^*+\overline{D(0,R )})$. Thus, $K$ is a bounded convex set. As a result, $F(z)=\int_Ke^{2\pi iz\cdot t}f(t)dt$ is an entire function, where $F(z)=F_1(z)$ for $z\in T_{\Gamma}$ and $F(z)=F_2(z)$ for $z\in T_{-\Gamma}$.
\end{proof}



\vskip 0.5cm{\parindent=0pt

\end{document}